
 \documentclass[reqno,11pt]{amsart}
 \usepackage{amsmath, amsthm, a4, latexsym, amssymb}
\usepackage[unicode]{hyperref}
\usepackage{srcltx}
\usepackage{xcolor}

\usepackage{mathrsfs}

\setlength{\topmargin}{0in}
\setlength{\headheight}{0.12in}
\setlength{\headsep}{.40in}
\setlength{\parindent}{1pc}
\setlength{\oddsidemargin}{-0.1in}
\setlength{\evensidemargin}{-0.1in}

\marginparwidth 48pt
\marginparsep 10pt

\oddsidemargin-0.5cm
\evensidemargin-.5cm

\headheight 12pt
\headsep 25pt
\footskip 30pt
\textheight  625pt 
\textwidth 170mm
\columnsep 10pt
\columnseprule 0pt
\setlength{\unitlength}{1mm}

\setlength{\parindent}{20pt}
\setlength{\parskip}{2pt}

\def\@rmrk#1#2{\refstepcounter
    {#1}\@ifnextchar[{\@yrmrk{#1}{#2}}{\@xrmrk{#1}{#2}}}

%
 
\makeatletter\@addtoreset{equation}{section}\makeatother

 \sloppy
 \parskip 0.8ex plus0.3ex minus0.2ex
 \parindent1em

 \newfont{\bfit}{cmbxti10 scaled 1200}

\renewcommand{\d}{{\rm d}}

 \newcommand{\e}{{\rm e} }

 \newcommand{\eps}{\varepsilon}

 \newcommand{\R}{\mathbb{R}}
 \newcommand{\N}{\mathbb{N}}
 \newcommand{\Z}{\mathbb{Z}}

 \newcommand{\E}{\mathbb{E}}
 \renewcommand{\P}{\mathbb{P}}
 \def\1{{\mathchoice {1\mskip-4mu\mathrm l} 
{1\mskip-4mu\mathrm l}
{1\mskip-4.5mu\mathrm l} {1\mskip-5mu\mathrm l}}}

\newcommand{\ignore}[1]{}
\newcommand{\heap}[2]{\genfrac{}{}{0pt}{}{#1}{#2}}

\newcommand{\ssup}[1] {{\scriptscriptstyle{({#1}})}}


\renewcommand{\subsection}{\secdef \subsct\sbsect}
\newcommand{\subsct}[2][default]{\refstepcounter{subsection}
\vspace{0.15cm}
{\flushleft\bf \arabic{section}.\arabic{subsection}~\bf #1  }
\nopagebreak\nopagebreak}
\newcommand{\sbsect}[1]{\vspace{0.1cm}\noindent
{\bf #1}\vspace{0.1cm}}

{\nopagebreak {\hfill\rule{2mm}{2mm}}\\ }

\newtheorem{theorem}{Theorem}[section]
\newtheorem{lemma}[theorem]{Lemma}
\newtheorem{cor}[theorem]{Corollary}

\newtheoremstyle{thm}{1.5ex}{1.5ex}{\itshape\rmfamily}{}
{\bfseries\rmfamily}{}{2ex}{}

\newtheoremstyle{rem}{1.3ex}{1.3ex}{\rmfamily}{}
{\itshape\rmfamily}{}{1.5ex}{}
\theoremstyle{rem}
\newtheorem{remark}{{\slshape\sffamily Remark}}[]

\refstepcounter{subsubsection}

\def\thebibliography#1{\section*{References}
  \list%
  {\arabic{enumi}.}
    {\settowidth\labelwidth{[#1]}\leftmargin\labelwidth
    \advance\leftmargin\labelsep
    \parsep0pt\itemsep0pt
    \usecounter{enumi}}
    \def\newblock{\hskip .11em plus .33em minus .07em}
    \sloppy                   
    \sfcode`\.=1000\relax}



 \begin{document}

\title[Quenched central limit theorem for the SHE]
{\large Quenched central limit theorem for the stochastic heat equation in weak disorder}
\author[Yannic Br\"oker and Chiranjib Mukherjee]{}
\maketitle
\thispagestyle{empty}
\vspace{-0.5cm}

\centerline{\sc Yannic Br\"oker  \footnote{University of Muenster, Einsteinstrasse 62, Muenster 48149, Germany, {\tt yannic.broeker@uni-muenster.de}} and 
Chiranjib Mukherjee \footnote{University of  Muenster, Einsteinstrasse 62, Muenster 48149, Germany, {\tt chiranjib.mukherjee@uni-muenster.de}
}}

\renewcommand{\thefootnote}{}
\footnote{\textit{AMS Subject
Classification:} 60J65, 60J55, 60F10.}
\footnote{\textit{Keywords:} Stochastic heat equation, continuous directed polymer, quenched central limit theorem, weak disorder}

\vspace{-0.5cm}
\centerline{\textit{University of Muenster}}
\vspace{0.2cm}

\begin{center}
\today

\end{center}

\begin{quote}{\small {\bf Abstract: }
We continue with  the study of the mollified stochastic heat equation in $d\geq 3$ given by $\d u_{\eps,t}=\frac 12\Delta u_{\eps,t}\d t+ \beta \eps^{(d-2)/2} \,u_{\eps,t} \,\d B_{\eps,t}$ with spatially smoothened 
cylindrical Wiener process $B$, whose (renormalized) Feynman-Kac solution describes the partition function of the continuous directed polymer. 
This partition function defines a (quenched) polymer path measure for every realization of the noise and we prove that as long as $\beta>0$ stays small enough, the distribution of the diffusively rescaled Brownian path converges under the aforementioned polymer path measure to the standard Gaussian distribution. }
\end{quote}

\section{Introduction and the result}

\subsection{Motivation.}
We continue the study of the {\it{stochastic heat equation}} (SHE) with multiplicative space-time white noise, formally written as 
\begin{equation}\label{she-formal}
\partial_t u(t,x)=\frac{1}{2}\Delta u(t,x)+u(t,x)\eta(t,x) 
\end{equation}
with $\eta$ being  a centered Gaussian process  with covariance $\mathbf E[\eta(t,x)\,\eta(s,y)]= \delta_0(s-t)\delta_0(x-y)$ for $s,t>0$ and $x,y\in\R^d$.
Note that the Cole-Hopf transformation $h:=-\log u$ translates the SHE to the {\it{Kardar-Parisi-Zhang}} (KPZ) equation, which is a non-linear stochastic partial differential equation also written formally as
$$
\partial_t h(t,x)=\frac{1}{2}\Delta h(t,x) - \frac 12 (\partial_x h(t,x))^2+ \eta(t,x).
$$
Note that both SHE and KPZ are a-priori ill-posed, as only distribution valued solutions are expected for both equations which carry fundamental obstacles arising from multiplying or squaring 
distributions. When the spatial dimension is one,  both equations can be analyzed on a rigorous level, as they
turn out to be the scaling limit of front propagation of some exclusion processes (\cite{BG97},\cite{SS10},\cite{ACQ11}). An intrinsic precise construction of their solutions also yields to
the powerful theories of {\it{regularity structures}} (\cite{H13}) as well as {\it{paracontrolled distributions}} (\cite{GP17}).

In the discrete lattice $\Z^d$, the solution of the SHE is directly related to the {\it{partition function}} of the {\it{discrete directed polymer}}, which is a well-studied model in statistical mechanics (see \cite{AKQ}, \cite{CSY04}).
The directed polymer measure is defined as
\begin{equation}\label{discretepolym}
\mu_n(\d \omega)= \frac 1{Z_n} \exp\bigg\{\beta \sum_{i=1}^n \eta\big(i,\omega(i)\big)\bigg\} \d \mathbb P_0,
\end{equation}
and in this scenario, the space-time white noise potential is replaced by i.i.d. random variables $\eta=\{\eta(n,x)\colon n\in\N, x\in \Z^d\}$ and the strength of the noise is captured by the disorder strength $\beta>0$.
If $\mathbf P$ denotes the law of the potential $\eta$ with $\mathbb P_0$ denoting the distribution of a simple random walk $\omega_n=\big(\omega(i)\big)_{i\leq n}$ 
starting at the origin and independent of the noise $\eta$, and $Z_n=\mathbb E_0[\exp\{\beta \sum_{i=1}^n \eta\big(i,\omega(i)\big)\}]$ denotes the normalizing constant, or the {\it{partition function}} of the discrete directed polymer, 
it is well-known that, when $d\geq 3$, 
the renormalized partition function
$Z_n / \mathbf E[Z_n]$ converges almost surely to a 
random variable $Z_\infty$, which, when $\beta$ is small enough, is positive almost surely (i.e., {\it{weak disorder}} persists \cite{IS88, B89}), and 
in this case, the distribution $\mu_n \, \big(\frac{\omega(n)}{\sqrt n}\big)^{-1}$ of the rescaled paths converges, for any realization of the noise $\eta$, to a centered non-degenerate Gaussian distribution (\cite{CY06}).
On the other hand, for $\beta$ large enough, the limiting random variable satisfies $Z_\infty=0$ (i.e.,
{\it{strong disorder holds}} \cite{CSY04}, \cite{CSY03}).

\subsection{The result.}
We turn to the scenario in the continuum in $d\geq 3$. Note that the equation \eqref{she-formal} can also be written (formally) as an SDE
\begin{equation}\label{stheateqSDE}
\d u_t= \frac 12 \Delta u_t \d t+ \beta\, u_t\, \d B_t,
\end{equation}
where $B_t$ is now a {\it{cylindrical Wiener process}}. That is, the family $\{B_t(f)\}_{f\in \mathcal S(\R^d)}$ is a centered Gaussian process  with covariance given by $\mathbf E[B_t(f)\, B_s(g)]=(t\wedge s) \langle f,g\rangle_{L^2(\R^d)}$ for Schwartz functions $f,g\in \mathcal S(\R^d)$. Defining \eqref{stheateqSDE} precisely requires studying 
a spatially smoothened version 
$$
B_\eps(t,x)= B_t(\phi_\eps(x-\cdot)\,)
$$
of $B_t$ for any smooth mollifier $\phi_\eps(x)=\eps^{-d}\phi(x/\eps)$. Here $\phi$  is chosen to be positive, even, smooth 
and compactly supported and normalized to have total mass $\int \phi=1$. If we write 
\begin{equation}
V_\eps(x)= (\phi_\eps\star \phi_\eps)(x) \qquad V(x)= (\phi\star \phi)(x),
\end{equation}
note that, for any fixed $\eps>0$,  $B_\eps$ is also a centered Gaussian process with covariance kernel $\mathbf E[B_\eps(t,x) \, B_\eps(s,y)]= (t\wedge s) V_\eps(x-y)$. If we denote by $\mathbb P_x$ the law of a Brownian motion $W=(W_t)_t$ started at $x\in \R^d$ and independent of the process $B$, then 
\begin{equation}\label{FK}
u_\eps(t,x)= \mathbb E_x\bigg[\exp\bigg\{\beta\eps^{(d-2)/2}\,\int_0^t\int_{\R^d}\, \phi_\eps(W_s-y) \,\dot B(t-s,\d y)\,\d s-\frac{\beta^2\eps^{d-2}}{2}tV_{\eps}(0)\bigg\}\bigg]
\end{equation}
represents the renormalized Feynman-Kac solution of the mollified stochastic heat equation 
\begin{equation}\label{SHE-mollified}
\begin{aligned}
& \d u_{\eps,t}= \frac 12 \Delta u_{\eps,t}\mathrm d t+ \beta\, \eps^{\frac{d-2}2}\, \, u_{\eps,t} \, \d B_{\eps,t}\qquad d\geq 3,\, \beta>0\\
& u_{\eps,0}= 1.
 \end{aligned}
\end{equation}
Clearly, 
$\mathbf E[u_\eps(t,x)]=1$. By time reversal, for any fixed $t>0$ and $\eps>0$,
\begin{equation}\label{scaling}
 u_{\eps} (t, \cdot)\,\, \overset{\ssup d}= \,\,M_{\eps^{-2}t}(\eps^{-1} \cdot)\;,
\end{equation}
where 
\begin{equation} \label{eq:M}
M_{T}(x) =M_{\beta,T}(x)= \E_x\left[ \exp\left\{\beta \int_0^T \int_{\R^d} \phi(W_s-y) \dot B(s,\d y) \d s\,  - \frac{\beta^2T}{2}V(0)\right\} \right].
\end{equation}
See (Eq.(2.6) in \cite{MSZ16}) for details. Then it was shown \cite[Theorem 2.1 and Remark 2.2]{MSZ16} that for $\beta>0$ sufficiently small and any test function $f\in\mathcal C_c^\infty(\R^d)$, 
\begin{equation}\label{eq-MSZ}
\int_{\R^d} u_\eps(t,x) \, f(x)\,\d x\to \int_{\R^d} \overline u(t,x) \, f(x)\,\d x
\end{equation}
 as $\eps\to 0$ in probability, with $\overline u$ solving the heat equation 
$\partial_t \overline u= \frac 12 \Delta \overline u$ with unperturbed diffusion coefficient. Furthermore, it was also shown in \cite{MSZ16} that, with $\beta$ small enough, and 
for any $t>0$ and $x\in \R^d$,  
$u_{\eps,t}(x) $ converges in law to a non-degenerate random variable $M_\infty$ which is almost surely strictly positive, while $u_{\eps,t}(x)$ converges in probability to zero if $\beta$ is chosen large. 
The pointwise fluctuations of $M_T(x)$ and $u_{\eps,t}(x)$ were studied also in a recent article (\cite{CCM18}) when $\beta$ is sufficiently small (in particular when $M_\infty$ is strictly positive). 
In particular, it was shown that (see \cite[Theorem 1.1 and Corollary 1.2]{CCM18}), in this regime, 
$$
T^{\frac{d-2}4}\bigg(\frac{M_T(x)-M_\infty(x)}{M_T(x)}\bigg)\Rightarrow N\big(0,\sigma^2(\beta)\big) \qquad \mbox{and}\,\,\, \sigma^2(\beta)\neq 1.
$$

Note that in view of the Feynman-Kac representation \eqref{FK}, $u_{\eps}(t,x)$ and $M_T(x)$ are directly related to the (renormalized) {\it{partition function of the continuous directed polymer}}, 
and following the terminology for discrete directed polymer, the strictly positive limit $M_\infty$ for small disorder strength $\beta$ is referred to as the {\it{weak-disorder regime}}, while for $\beta$ large, a vanishing partition function $\lim_{T\to\infty} M_T$ underlines the {\it{strong disorder phase}}. In fact, the polymer model corresponding to \eqref{eq:M} is known as the {\it{Brownian directed polymer in a Gaussian environment}} and the reader is refered to \cite{CC18} for a review of a similar model driven by a Poissonian noise (see also \cite{CY05}, \cite{C18}).

Despite the aforementioned recent results pertaining to the partition function for the continuous directed polymer, the investigation of the actual {\it{polymer path measure}} had remained open. Note that 
the (quenched) polymer path measure  is defined as
\begin{equation}\label{q-polym-meas}
\d\widehat{\mathbb Q}_{\beta,T}^{\ssup x}=\frac 1 {Z_{\beta, T}} \,\, \exp\bigg\{\beta\,\int_0^T\int_{\R^d}\, \phi(W_s-y) \,\dot B(s,\d y)\,\d s\bigg\} \d \P_x,
\end{equation}
for every realization of the  noise $B$. Here $Z_{\beta,T}$ is the un-normalized partition function, i.e., 
$$
\begin{aligned}
Z_{\beta,T}= \E_x\bigg[\exp\bigg\{\beta\,\int_0^T\int_{\R^d}\, \phi(W_s-y) \,\dot B(s,\d y)\,\d s\bigg\}\bigg]&= M_T(x)\, \mathbf E[Z_{\beta,T}]\\
&= M_T(x) \exp\bigg\{\frac{\beta^2 T}2 V(0)\bigg\}.
\end{aligned}
$$
Throughout this article we will assume that $d\geq 3$ and 
$$
\beta<\beta_{L^2}:=\sup\bigg\{b>0\colon\, \sup_T \|M_T\|_{L^2(\mathbf P)}<\infty\bigg\} \in (0,\infty)^\dagger\footnote{$\dagger$Note that a standard Gaussian computation implies that 
$\mathbf E[M_T^2]=\E_0\big[\exp\{\beta^2\int_0^T V(\sqrt 2 W_s)\d s\}\big] \leq \E_0\big[\exp\{\beta^2\int_0^\infty V(\sqrt 2 W_s)\d s\}\big]$. Since $d\geq 3$ and $V\geq 0$ is a continuous function with compact support, 
$\beta^2 \sup_{x\in\R^d} \E_x[\int_0^\infty V(\sqrt 2 W_s)\d s]=\eta < 1$ as soon as $\beta>0$ is chosen small enough. Then by Khas'minski's lemma, $\sup_{x\in\R^d} \E_x[\exp\{\beta^2 \int_0^\infty V(\sqrt 2 W_s)\d s\}]=\frac \eta{1-\eta}<\infty$, and hence $\beta_{L^2} \in (0,\infty)$.}
$$ 
Then the goal of the present article is to show that, for almost every realization of the noise $B$, 
the law of the diffusively rescaled Brownian path under $\widehat{\mathbb Q}_{\beta,T}^{\ssup x}$ converges to the standard Gaussian law. 
We turn to a precise statement of our main result.


\begin{theorem}
	\label{key step}
	Let us assume that $d\geq 3$ and $\beta<\beta_{L^2}$. Then for any $x\in \R^d$ and $\mathbf P$-almost surely, 
	the distribution $\widehat{\mathbb{Q}}_{\beta,T}^{\ssup x}\,\, \big(\frac{ W_{T}} {\sqrt T}\big)^{-1}$ converges weakly to a $d$-dimensional centered Gaussian measure with identity covariance matrix. 
\end{theorem}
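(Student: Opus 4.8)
The plan is to reduce the asserted weak convergence to the pointwise convergence of the quenched characteristic function and to prove the latter by a second-moment (two-replica) computation that exploits the weak-disorder assumption $\beta<\beta_{L^2}$. Writing $\mathcal E_T=\exp\{\beta\int_0^T\int_{\R^d}\phi(W_s-y)\dot B(s,\d y)\d s-\frac{\beta^2T}{2}V(0)\}$ and clearing the common renormalisation factor $\exp\{\frac{\beta^2T}{2}V(0)\}$ from numerator and denominator in \eqref{q-polym-meas}, the quenched characteristic function of $W_T/\sqrt T$ equals $\Phi_T(\lambda)/M_T(x)$, where $\Phi_T(\lambda)=\E_x[e^{i\langle\lambda,W_T/\sqrt T\rangle}\mathcal E_T]$ is a tilted partition function and $M_T(x)=\E_x[\mathcal E_T]$. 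Since $M_T$ is a positive $(\cF_T)_{T\ge0}$-martingale in the noise filtration that is bounded in $L^2(\mathbf P)$ for $\beta<\beta_{L^2}$, it converges $\mathbf P$-almost surely and in $L^2$ to a strictly positive limit $M_\infty$; hence it suffices to show $\Phi_T(\lambda)\to e^{-|\lambda|^2/2}M_\infty$ almost surely.

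First I would establish the $L^2(\mathbf P)$ statement $\mathbf E[|\Phi_T(\lambda)-e^{-|\lambda|^2/2}M_T|^2]\to0$. Running two independent Brownian copies $W,W'$ in the same environment and computing the Gaussian expectation over $B$ gives the identity $\mathbf E[\mathcal E_T\mathcal E_T']=\exp\{\beta^2\int_0^T V(W_s-W_s')\d s\}$, whence
\begin{align*}
\mathbf E[|\Phi_T(\lambda)|^2]&=\E_x^{\otimes2}\Big[e^{i\langle\lambda,(W_T-W_T')/\sqrt T\rangle}\,e^{\beta^2\int_0^T V(W_s-W_s')\d s}\Big],\\
\mathbf E[M_T\overline{\Phi_T(\lambda)}]&=\E_x^{\otimes2}\Big[e^{-i\langle\lambda,W_T/\sqrt T\rangle}\,e^{\beta^2\int_0^T V(W_s-W_s')\d s}\Big].
\end{align*}
The relative motion $R_s:=W_s-W_s'$ is a time-changed Brownian motion which, since $d\ge3$, is transient, so $\int_0^T V(R_s)\d s\uparrow\int_0^\infty V(R_s)\d s<\infty$ and $\mathbf E[M_T^2]\to\mathbf E[M_\infty^2]=\E_x^{\otimes2}[e^{\beta^2\int_0^\infty V(R_s)\d s}]$. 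The crucial point is a decoupling: because $V$ is compactly supported and $R$ leaves $\supp V$ for good at an almost surely finite time, the weight depends essentially on the path only up to a finite horizon, while the diffusively rescaled endpoints $(W_T-W_T')/\sqrt T$ and $W_T/\sqrt T$ converge to $N(0,2I_d)$ and $N(0,I_d)$ and become asymptotically independent of this weight. Combined with the uniform integrability furnished by $\beta<\beta_{L^2}$, this yields $\mathbf E[|\Phi_T(\lambda)|^2]\to e^{-|\lambda|^2}\mathbf E[M_\infty^2]$ and $\mathrm{Re}\,\mathbf E[M_T\overline{\Phi_T(\lambda)}]\to e^{-|\lambda|^2/2}\mathbf E[M_\infty^2]$, so that expanding the square gives the claimed $L^2$ convergence.

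The main obstacle is upgrading this $L^2$ (hence in-probability) convergence to the almost-sure convergence required for a quenched statement, and doing so simultaneously for all $\lambda$. I would quantify the decoupling step in $d\ge3$ to obtain a polynomial rate $\mathbf E[|\Phi_T(\lambda)-e^{-|\lambda|^2/2}M_T|^2]\le C(\lambda)\,T^{-\gamma}$, controlled through the polynomial tails of $\int_T^\infty V(R_s)\d s$ and of $\mathbf P(R\ \text{still meets}\ \supp V\ \text{after time}\ T)$. Choosing a geometric subsequence $T_k=\theta^k$ then makes the bound summable, so Chebyshev's inequality and Borel--Cantelli give $\Phi_{T_k}(\lambda)\to e^{-|\lambda|^2/2}M_\infty$ almost surely along $T_k$; a modulus-of-continuity estimate for $T\mapsto\Phi_T(\lambda)$ on each block $[T_k,T_{k+1}]$, bounding the effect of the added noise and of the change of the endpoint scaling, then removes the subsequence. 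Running this for $\lambda$ in a countable dense set, intersecting the corresponding full-measure events, and combining with an almost-sure second-moment bound $\limsup_T\widehat{\mathbb Q}_{\beta,T}^{\ssup x}[\,|W_T/\sqrt T|^2\,]<\infty$ to secure tightness, Lévy's continuity theorem upgrades pointwise convergence of the quenched characteristic functions on a dense set to the asserted weak convergence to the standard Gaussian law for $\mathbf P$-almost every realisation of $B$.
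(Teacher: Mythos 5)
Your route is genuinely different from the paper's. The paper works with moments rather than characteristic functions: it introduces the Hermite-type $(\mathcal G_T)$-martingales $I_n(T,W_T)$ obtained by differentiating $\exp\{\langle\lambda,W_T\rangle-\frac12|\lambda|^2T\}$ at $\lambda=0$, shows that $Y_n(T)=\E_0[\e^{H_{\beta,T}}I_n(T,W_T)]$ is an $(\mathcal F_T)$-martingale, bounds the unit-time increments $\mathbf E[(Y_n(T)-Y_n(T-1))^2]$ by $CT^{|n|-9/8}$ using exactly the two-replica/compact-support/transience decoupling you describe, deduces $T^{-|n|/2}Y_n(T)\to0$ almost surely by an $L^2$-bounded martingale plus a Kronecker-type argument, and finally identifies the limiting moments as Gaussian by induction on $|n|$. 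Your plan replaces this by the quenched characteristic function $\Phi_T(\lambda)/M_T$, a quantitative $L^2(\mathbf P)$ bound on $\Phi_T(\lambda)-\e^{-|\lambda|^2/2}M_T$, Borel--Cantelli along $T_k=\theta^k$, and L\'evy continuity. The two-replica identity and the resulting $L^2$ convergence are correct, and a polynomial rate is indeed attainable, provided you decouple at an intermediate time $S(T)$ with $S(T)\to\infty$ and $S(T)/T\to0$ (decoupling at a fixed horizon gives no rate, and decoupling at $T$ itself gives nothing); with H\"older at an exponent slightly above $1$ this only uses $\beta<\beta_{L^2}$.

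The genuine gap is the interpolation between the times $T_k$, which is precisely where the almost-sure (as opposed to in-probability) character of the theorem must be earned, and which you dispose of in a single clause. Writing $\Phi_T(\lambda)=\e^{-|\lambda|^2/2}N_T(\lambda/\sqrt T)$ with the $(\mathcal F_T)$-martingale $N_T(\mu)=\E_0[\e^{i\langle\mu,W_T\rangle+|\mu|^2T/2}\mathcal E_T]$, on each block $[T_k,T_{k+1}]$ you must control both the martingale oscillation of $T\mapsto N_T(\mu)$ uniformly over the (shrinking) range of $\mu=\lambda/\sqrt T$, and the effect of changing the argument $\mu$ across the block. For the latter, $|\nabla_\mu N_{T_k}(\mu)|$ is of order $\sqrt{T_k}$ (it involves $\E_0[|W_{T_k}|\,\mathcal E_{T_k}]$) while the increment of $\mu$ is of order $(\theta-1)T_k^{-1/2}$, so the natural bound is only $O(\theta-1)$, not $o(1)$: you must send $\theta\downarrow1$ after $k\to\infty$, and, more seriously, you need an almost-sure bound such as $\sup_T T^{-1/2}\E_0[|W_T|\,\mathcal E_T]/M_T<\infty$, which is a quenched statement of exactly the same nature and difficulty as the theorem itself. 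The same remark applies to the almost-sure tightness bound $\limsup_T\E^{\widehat{\mathbb Q}_{\beta,T}^{\ssup x}}[|W_T/\sqrt T|^2]<\infty$ that you invoke without proof. These are the pieces the paper's $Y_n$-machinery is built to supply; as written, your argument delivers convergence in probability, not the almost-sure convergence claimed.
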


\begin{remark}
	For the discrete directed polymer (recall \eqref{discretepolym}), a stronger version of Theorem \ref{key step} was obtained (\cite[Theorem 1.2]{CY06}) for $d\geq 3$ and disorder strength $\beta<\beta_c(d)$ such that the renormalized partition function $Z_n/\mathbf{E} [Z_n]$ converges to a strictly positive random variable (i.e., the {\it{whole weak disorder region}} is considered where $Z_n/\mathbf E[Z_n]$ is not necessarily $L^2(\mathbf P)$-bounded). 
	Also, the result in \cite{CY06} covers not only convergence of the one-time marginal but also convergence of the process, i.e., it is shown that for any suitable test function $F$ on the path space, $\mathbb E^{\mu_n}[F(\omega(n\cdot)/\sqrt n)] \to \mathbb E_0[F(W)]$ in probability with respect to $\mathbf P$ and  $W$ is a Brownian motion in $\R^d$. However, the convergence assertion in \cite{CY06} holds in probability w.r.t $\mathbf P$ unlike the almost sure statement in Theorem \ref{key step}. We believe that using the approach in \cite{CY06}, Theorem \ref{key step} can be extended to the whole weak disorder region (i.e., when $M_\infty$ is a non-degenerate strictly positive random variable). 
	\qed
\end{remark}

Theorem \ref{key step} also implies a central limit theorem for the path measures for the mollified stochastic heat equation \eqref{SHE-mollified}. 
Recall the relation \eqref{scaling}, and note that, for any fixed $t>0$, 
\begin{equation}\label{q-polym-meas}
\d\widetilde{\mathbb Q}_{\beta,\eps,t}=\frac 1 {Z_{\beta,\eps,t}} \,\, \exp\bigg\{\beta\eps^{\frac{d-2}2}\,\int_0^t\int_{\R^d}\, \phi_\eps(W_s-y) \,\dot B(t-s,\d y)\,\d s\bigg\} \d \P_0,
\end{equation}
defines the (quenched) path measure for \eqref{SHE-mollified}, and here 
$$
Z_{\beta,\eps,t}=u_{\eps,t}(0) \, \exp\bigg\{\frac {t \beta^2}2 \eps^{(d-2)/2} \, \, V_\eps(0)\bigg\}.
$$
The following result then will be a direct consequence of Theorem \ref{key step}.

\begin{cor}
	\label{CLT for SHE-1}
	Let $d\geq 3$ and $\beta<\beta_{L^2}$ as in Theorem \ref{key step}. Then for any fixed $t>0$, 
	$$
	\widetilde{\mathbb Q}_{\beta,\eps,t} \,\,  (\eps W_{\eps^{-2}})^{-1}\,\, {\Rightarrow} \,\, \mathbf{N}(\mathbf{0},\mathbf{I_{d}})
	$$ 
	and the above convergence holds in probability with respect to the law $\mathbf P$ of the noise $B$.
\end{cor}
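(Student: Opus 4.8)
The plan is to prove weak convergence through the quenched characteristic function. Writing $c:=\exp(-|\theta|^2/2)$ and introducing the renormalized Wick exponential
\[
\mathscr E_T(W)=\exp\Big\{\beta\int_0^T\!\!\int_{\R^d}\phi(W_s-y)\,\dot B(s,\d y)\,\d s-\tfrac{\beta^2T}2V(0)\Big\},
\]
so that $M_T(x)=\E_x[\mathscr E_T(W)]$ and $\mathbf E[\mathscr E_T(W)]=1$, the deterministic renormalization $\exp\{\tfrac{\beta^2T}2V(0)\}$ cancels in the ratio and the characteristic function of the rescaled endpoint becomes
\[
\psi_T(\theta):=\mathbb E^{\widehat{\mathbb Q}_{\beta,T}^{\ssup x}}\big[\mathrm e^{i\theta\cdot W_T/\sqrt T}\big]=\frac{N_T(\theta)}{M_T(x)},\qquad N_T(\theta):=\E_x\big[\mathrm e^{i\theta\cdot W_T/\sqrt T}\,\mathscr E_T(W)\big].
\]
Since for fixed endpoint-time $\mathscr E_T(W)$ is a Wick exponential martingale in $T$ for each path, $M_T(x)$ is a nonnegative $L^2(\mathbf P)$-bounded martingale (for $\beta<\beta_{L^2}$), hence converges $\mathbf P$-a.s. to the strictly positive limit $M_\infty(x)$. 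It therefore suffices to show $N_T(\theta)-c\,M_T(x)\to0$ $\mathbf P$-a.s. Once this is known for every $\theta$ in a countable dense set, together with an a.s. tightness bound $\sup_T\mathbb E^{\widehat{\mathbb Q}_{\beta,T}^{\ssup x}}[|W_T/\sqrt T|^2]<\infty$ (from the same second-moment machinery below), the L\'evy continuity theorem upgrades pointwise convergence of $\psi_T$ to weak convergence of the whole law, $\mathbf P$-a.s.

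The core estimate is a two-replica/second-moment computation. Expanding $\mathbf E[|N_T(\theta)-cM_T(x)|^2]$ and carrying out the Gaussian integration over the noise replaces the environment by two independent Brownian motions $W^1,W^2$ from $x$ carrying the replica weight $\exp\{\beta^2\int_0^T V(W^1_s-W^2_s)\,\d s\}$:
\[
\mathbf E\big[|N_T-cM_T|^2\big]=\E_x^{\otimes2}\Big[\big(\mathrm e^{i\theta\cdot W^1_T/\sqrt T}-c\big)\big(\mathrm e^{-i\theta\cdot W^2_T/\sqrt T}-c\big)\,\mathrm e^{\beta^2\int_0^TV(W^1_s-W^2_s)\,\d s}\Big].
\]
The condition $\beta<\beta_{L^2}$ is precisely finiteness of $\E_x^{\otimes2}[\exp\{\beta^2\int_0^\infty V(W^1_s-W^2_s)\,\d s\}]$, which holds because $W^1-W^2$ is transient in $d\ge3$ and $V$ has compact support. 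Transience forces the replica weight to be determined, up to a vanishing tail, by the behaviour on a bounded time window, whereas the endpoint fluctuations live at scale $\sqrt T$; the two decouple, and since $W^i_T/\sqrt T$ is asymptotically $N(0,I)$ each bracket averages to $0$. This yields $\mathbf E[|N_T-cM_T|^2]\to0$, hence $\psi_T(\theta)\to c$ in probability — the continuum analogue of the Comets--Yoshida statement of \cite{CY06}.

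To obtain the almost sure conclusion — the genuinely new point over \cite{CY06} — I would quantify this decoupling. Conditioning the two replicas at an intermediate time $M=M(T)\to\infty$ and using the Markov property, the increments $W^i_T-W^i_M$ are independent Gaussians, so each conditional bracket is $O(|W^i_M|/\sqrt T+M/T)$; combining this with the transience tail $\E_x^{\otimes2}[\exp\{\beta^2\int_0^\infty V\}\int_M^\infty V(W^1_s-W^2_s)\,\d s]$ and optimizing over $M$ gives a polynomial rate $\mathbf E[|N_T(\theta)-cM_T(x)|^2]\le C_\theta\,T^{-\kappa}$. Along a subsequence $T_k=k^{2/\kappa}$ this is summable, so Borel--Cantelli yields $N_{T_k}(\theta)-cM_{T_k}(x)\to0$ $\mathbf P$-a.s., and dividing by $M_{T_k}(x)\to M_\infty(x)>0$ gives $\psi_{T_k}(\theta)\to c$.

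The remaining, and I expect hardest, step is to fill the gaps between the $T_k$: one must control the oscillation of $T\mapsto N_T(\theta)$ and $T\mapsto M_T(x)$ over $[T_k,T_{k+1}]$ uniformly. Because $T_{k+1}/T_k\to1$, the change in the Fourier phase $\mathrm e^{i\theta\cdot W_T/\sqrt T}$ is small by the Brownian modulus of continuity, while the martingale increment $\mathscr E_T-\mathscr E_{T_k}$ is handled through a maximal inequality together with the summability of $\sum_k\mathbf E[(M_{T_{k+1}}-M_{T_k})^2]$ furnished by $L^2$-boundedness. A secondary technical nuisance is that the Cauchy--Schwarz steps in the rate estimate require exponential moments slightly beyond $L^2$; these are available for $\beta$ strictly below the Khas'minskii $L^2$-threshold, so one either argues there directly or approximates. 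Combining the subsequential convergence with the gap estimate gives $\psi_T(\theta)\to c$ for all $T$, $\mathbf P$-a.s., and intersecting over a countable dense set of $\theta$ completes the proof.
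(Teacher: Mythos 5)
Your proposal does not actually address the statement of Corollary \ref{CLT for SHE-1}. The corollary concerns the path measure $\widetilde{\mathbb Q}_{\beta,\eps,t}$ of the \emph{mollified} equation \eqref{SHE-mollified}, whose Hamiltonian $\beta\eps^{(d-2)/2}\int_0^t\int_{\R^d}\phi_\eps(W_s-y)\,\dot B(t-s,\d y)\,\d s$ couples the path to the noise only on the fixed window $[0,t]$, while the observable $\eps W_{\eps^{-2}}$ is the endpoint at the much later time $\eps^{-2}$. What you sketch instead is the quenched CLT for the polymer measure $\widehat{\mathbb Q}_{\beta,T}$ --- essentially Theorem \ref{key step}, which the corollary is entitled to take as given. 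The actual content of the corollary, entirely absent from your argument, is the reduction of the one statement to the other: one applies the Markov property at time $t$ to integrate out the free increment $W_{\eps^{-2}}-W_t$ (producing the factor $\e^{\frac{|\lambda|^2}{2}(1-\eps^2)}$ in the exponential-moment computation), and then invokes the diffusive scaling/time-reversal identity \eqref{scaling}, $u_\eps(t,\cdot)\overset{\ssup d}{=}M_{\eps^{-2}t}(\eps^{-1}\cdot)$, to identify the remaining quantity in law with $\E^{\widehat{\mathbb Q}_{\beta,T}}\big[\exp\{\frac 1{\sqrt T}\langle\lambda, W_T/\sqrt T\rangle\}\big]$ for $T=\eps^{-2}$. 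Note also that this identification holds only in distribution, which is precisely why the corollary is stated as convergence in probability rather than almost surely; your almost-sure framework never engages with this point.

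Separately, even read as a blind proof of Theorem \ref{key step}, your argument has a genuine gap that you yourself flag: the passage from the subsequence $T_k$ to all $T$. The quantity $N_T(\theta)=\E_x[\e^{i\theta\cdot W_T/\sqrt T}\mathscr E_T(W)]$ is \emph{not} a martingale in $T$ (the phase depends on $T$ both through $W_T$ and through the normalization $\sqrt T$), so no maximal inequality applies to it directly, and controlling its oscillation over $[T_k,T_{k+1}]$ is the hard part of your route, not a technical nuisance. The paper sidesteps this entirely by replacing the characteristic function with the space-time harmonic (Hermite) polynomials $I_n(T,W_T)$ of \eqref{def-In}: since $I_n(T,W_T)$ is a $\mathcal G_T$-martingale, the quantities $Y_n(T)=\E_0[\exp\{H_{\beta,T}\}I_n(T,W_T)]$ are genuine $(\mathcal F_T)$-martingales (Lemma \ref{lemma-Y-mart}), and the martingale-difference bound of Lemma \ref{lemma-mart-difference} together with $L^2$-bounded martingale convergence yields $T^{-|n|/2}Y_n(T)\to 0$ almost surely for all $T$ at once, whence convergence of all moments. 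If you wish to pursue the characteristic-function route you must supply the missing oscillation estimate; otherwise it is the martingale structure that makes the almost-sure statement accessible, and the corollary itself still requires the scaling and Markov-property reduction described above.
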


\begin{proof}[{\bf{Proof of Corollary \ref{CLT for SHE-1} (assuming Theorem \ref{key step})}}]
	Since  $t>0$ is fixed, for simplicity we will take $t=1$ and prove the result for $\widetilde{\mathbb Q}_{\beta,\eps}= \widetilde{\mathbb Q}_{\beta,\eps,1}$. 
	It suffices to show that 
	$$
	\E^{\widetilde{\mathbb Q}_{\beta,\eps}}\bigg[\e^{\eps\langle\lambda, W_{\eps^{-2}}\rangle}\bigg] \to \e^{|\lambda|^2/2} \qquad\mbox{in probability w.r.t}\,\,\,\mathbf P.
	$$
	Then  for $\eps<1$, 
	\begin{align*}
		\mathbb E_0&\bigg[\exp\Big\{\beta\eps^{(d-2)/2}\int_0^1\int_{\R^d}\phi_{\eps}(W_s-y)\dot B(1-s,\d y)\d s\Big\}\e^{\eps\langle\lambda,W_{\eps^{-2}}\rangle}\bigg]\\
		&=\mathbb E_0\bigg[\exp\Big\{\beta\eps^{(d-2)/2}\int_0^1\int_{\R^d}\phi_{\eps}(W_s-y)\dot B(1-s,\d y)\d s\Big\}\e^{\eps\langle\lambda,W_1\rangle}\e^{\eps\langle\lambda,W_{\eps^{-2}}-W_1\rangle}\bigg]\\
		&=\mathbb E_0\bigg[\exp\Big\{\beta\eps^{(d-2)/2}\int_0^1\int_{\R^d}\phi_{\eps}(W_s-y)\dot B(1-s,\d y)\d s\Big\}\e^{\eps\langle\lambda,W_1\rangle}\bigg]\mathbb E_0\Big[\e^{\eps\langle\lambda,W_{\eps^{-2}}-W_1\rangle}\Big]\\
		&=\mathbb E_0\bigg[\exp\Big\{\beta\eps^{(d-2)/2}\int_0^1\int_{\R^d}\phi_{\eps}(W_s-y)\dot B(1-s,\d y)\d s\Big\}\e^{\eps\langle\lambda,W_1\rangle}\bigg]\e^{\frac{|\lambda|^2}{2}(1-\eps^2)}, 
	\end{align*}
         and we have used Markov property for $W$ at time $1$. If we now recall \eqref{scaling}, 
	\begin{align*}
	\mathbb E_0\bigg[&\exp\Big\{\beta\eps^{(d-2)/2}\int_0^1\int_{\R^d}\phi_{\eps}(W_s-y)\dot B(1-s,\d y)\d s\Big\}\e^{\eps\langle\lambda,W_1\rangle}\bigg]\\
	&\overset{(d)}{=}\mathbb E_0\bigg[\exp\Big\{\beta\int_0^{\eps^{-2}}\int_{\R^d}\phi(y-W_s)\dot B(s,\d y)\d s\Big\}\e^{\eps^2\langle\lambda,W_{\eps^{-2}}\rangle}\bigg].
	\end{align*}
	From now on we will  abbreviate
	$$
	H_{\beta,\eps}(W,B)= \beta\int_0^{\eps^{-2}}\int_{\R^d}\phi(y-W_s)\dot B(s,\d y)\d s-\frac{\beta^2}{2\eps^2}V(0).
	$$
	Then, we have
	$$
	\begin{aligned}
	\E^{\widetilde{\mathbb Q}_{\beta,\eps}}\big[\e^{\eps\langle \lambda, W_{\eps^{-2}}\rangle}\big] 
	&\overset{(d)}{=}\e^{\frac{|\lambda|^2}{2}(1-\eps^2)}\,\,\frac{\mathbb E_0\big[\exp\big\{H_{\beta,\eps}(W,B)+{\eps\langle\lambda,\,\,\eps \,\,W_{\eps^{-2}}\rangle}\big\}\big]}{\mathbb E_0\big[\exp\big\{H_{\beta,\eps}(W,B)\big\}\big]} \\
	&=\e^{\frac{|\lambda|^2}{2}\big(1-\frac 1 T\big)}\,\,\E^{\widehat{\mathbb Q}_{\beta,T}}\big[\e^{\frac 1 {\sqrt T}\langle \lambda, \,\,\frac{W_{T}}{\sqrt T}\rangle}\big] \\
	&\to \e^{|\lambda|^2/2},
	\end{aligned}
	$$
	where, in the second identity above, we wrote $T:=\eps^{-2}$ and the last statement holds true for $\mathbf{P}$-almost every realization of $B$ and follows from Theorem \ref{key step}. 
	Indeed, we can expand the exponential $\exp\{\frac 1 {\sqrt T}\langle \lambda, \,\,\frac{W_{T}}{\sqrt T}\rangle\}$
	into a power series, and since all moments $\E^{\widehat{\mathbb Q}_{\beta,T}}[\langle \lambda, \,\,\frac{W_T}{\sqrt T}\rangle^n]$ converge, according to Theorem \ref{key step}, to the moments $E[\langle \lambda, X\rangle^n]$ of a Gaussian $X\sim N(0, \mathbf{I_{d}})$, the expectation $\E^{\widehat{\mathbb Q}_{\beta,T}}\big[\exp\big\{\frac 1 {\sqrt T}\langle \lambda, \,\,\frac{W_{T}}{\sqrt T}\rangle\big\}\big]$ converges $\mathbf P$-a.s. to $1$.	This concludes the proof of Corollary \ref{CLT for SHE-1}.
\end{proof}

\begin{remark}\label{remark2}
	As remarked earlier, in \cite{MSZ16} it was shown that the solution $u_{\eps,t}(x)$ of the stochastic heat equation \eqref{SHE-mollified} with constant initial condition $1$ converges in probability w.r.t. $\mathbf P$ in the weak disorder phase (i.e., for $\beta>0$ small enough) to a strictly positive random variable $M_\infty(x)$. Note that the argument for the proof of Corollary \ref{CLT for SHE-1} also implies the convergence of the ratio
	$$
	\frac{ \widehat u_\eps^{\ssup\lambda}}{u_\eps} \to 1 \qquad\text{as }\eps\to 0,
	$$
	where $\widehat u_\eps^{\ssup\lambda}$ denotes the solution of the stochastic heat equation \eqref{SHE-mollified} with initial condition $u_{\eps,0}^{\ssup\lambda}(x)=f(x)=\e^{\eps\langle\lambda, x\rangle}$.
	Although a proof of Corollary \ref{CLT for SHE-1} could probably be given by first proving the convergence of the above ratio following analytical tools, for our purposes we choose to rely on more probabilistic arguments based on $L^2(\mathbf{P})$ computations and martingale methods as in \cite{MSZ16} and \cite{B89}.\qed
\end{remark}

\begin{remark}\label{remark1}
	The case when the noise $B$ is smoothened both in time and space has also recently been considered. If $F(t,x)= \int_{\R^d}\int_0^\infty \phi_1(t-s) \phi_2(x-y) \d B(s,y)$ is the mollified noise, and $\hat u_\eps(t,x)=u(\eps^{-2}t,\eps^{-1}x)$ with $u$ solving $\partial_t u=\frac 12 \Delta u + \beta \, F(t,x) u$ with initial condition $u(0,x)=u_0(x)$ with $u_0\in C_b(\R^d)$, then it was shown in \cite{M17} that for any $\beta>0$ and $x\in \R^d$, $\frac 1 {\kappa(\eps,t)}\,\, \mathbb E[\hat u_\eps(t,x)] \to \hat u(t,x)$ as $\eps\to 0$, where $\kappa(\eps,t)$ is a divergent constant and $\hat u(t,x)$ solves the homogenized heat equation 
\begin{equation}\label{eq-heat-homog}
\partial_t \hat u= \frac 12 \mathrm{div}\big(\mathrm{a_\beta} \nabla \hat u\big)
\end{equation}
with diffusion coefficient $\mathrm a_\beta\neq \mathbf {I}_d$. It was shown in \cite{MU17,GRZ17} 
that, for $\beta>0$ small enough, the rescaled and spatially averaged fluctuations converge, i.e., 
$$
\eps^{1-d/2}\int \d x f(x)[\hat u_\eps(t,x)-\mathbf E(\hat u_\eps(t,x)] \Rightarrow \int f(x) \mathscr U(t,x) \, \d x 
$$ 
and the limit $\mathscr U$ satisfies the additive noise stochastic heat equation 
$$
\partial_t \mathscr U=  \frac 12 \mathrm{div}\big(\mathrm{a_\beta} \nabla \mathscr U\big) + \beta \nu^2(\beta) \,\hat u \, \d B
$$
 with diffusivity $\mathrm a_\beta$ and variance $\nu^2(\beta)$, and $\hat u$ solves \eqref{eq-heat-homog}. 
\end{remark}

\begin{remark}
Finally we remark on the $T\to\infty$-asymptotic behavior of the polymer measures $\widehat{\mathbb Q}_{\beta,T}$ when $\beta$ is large. Recall that, for large $\beta$, it was shown in \cite{MSZ16} that the renormalized partition function $M_T$ converges in distribution to $0$ (i.e., we are in the strong disorder regime). In a recent article (\cite{BM18}), based on the compactness theory developed in \cite{MV14}, we show that for large enough $\beta$, the distribution of the endpoint of the path $W_T$ under $\widehat{\mathbb Q}_{\beta,T}$ is concentrated in random spatial regions, leading to a strong localization effect.
\end{remark}

The rest of the article is devoted to the proof of Theorem \ref{key step}.

\section{Proof of Theorem \ref{key step}}

We remind the reader that $V=\phi\star\phi$ where $\phi$ is a smooth, positive, even function and has support in a ball of radius $K$ around the origin. Moreover, $\int_{\R^d} \phi=1$. 
Furthermore, $\P_0$ denotes the law of a Brownian motion $W$ in $\R^d$, starting from $0$, with $\E_0$ denoting the corresponding expectation, while $\mathbf P$ denotes the law of the white noise $B$ which is independent of $W$ and $\mathbf E$ denotes the corresponding expectation. We will also denote by $\mathcal{F}_T$ the $\sigma$-field generated by the noise $B$ up to time $T$.

For simplicity, we will fix $x=0$ and we will show that, for $\beta<\beta_{L^2}$ and any $\lambda\in \R^d$, $\mathbf P$-almost surely, 
\begin{equation}\label{claim}
\lim_{T\to\infty} \E^{\widehat{\mathbb Q}_{\beta,T}} \bigg[ \e^{\langle \lambda , W_T/\sqrt T\rangle} \bigg] = \e^{|\lambda|^2/2}
\end{equation}
where $\widehat{\mathbb Q}_{\beta,T}=\widehat{\mathbb Q}_{\beta,T}^{\ssup 0}$.





\begin{lemma}
	\label{lem2}
	In $d\geq 3$ and for $\beta>0$ small enough, $M_T$ converges almost surely to a random variable $M_\infty$ that satisfies
	$$\mathbf E[M_{\infty}]=1\quad\text{and}\quad\mathbf P(M_{\infty}=0)=0.$$
\end{lemma}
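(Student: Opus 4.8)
The plan is to exploit the martingale structure of $T\mapsto M_T$ with respect to the noise filtration $(\mathcal F_T)_{T}$. First I would observe that for each \emph{frozen} Brownian trajectory $W$, the random variable
$$
\mathcal E_T(W):=\exp\Big\{\beta\int_0^T\int_{\R^d}\phi(W_s-y)\,\dot B(s,\d y)\,\d s-\frac{\beta^2 T}{2}V(0)\Big\}
$$
is a genuine stochastic exponential in $T$: for fixed $W$ the stochastic integral is a continuous Gaussian $(\mathcal F_T)$-martingale whose quadratic variation at time $T$ equals $\int_0^T\int_{\R^d}\phi(W_s-y)^2\,\d y\,\d s=T\,V(0)$, using that $\phi$ is even so that $V(0)=\|\phi\|_{L^2}^2$. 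Hence the deterministic renormalization $-\tfrac{\beta^2T}{2}V(0)$ is exactly the compensator, so $\mathbf E[\mathcal E_T(W)]=1$ and $(\mathcal E_T(W))_T$ is an $(\mathcal F_T)$-martingale. Integrating over $W$ and combining Fubini with the tower property, $M_T=\E_0[\mathcal E_T(W)]$ is a nonnegative $(\mathcal F_T)$-martingale with $\mathbf E[M_T]=1$. The martingale convergence theorem then produces a limit $M_T\to M_\infty\geq 0$ that exists $\mathbf P$-almost surely.

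It remains to upgrade this to $\mathbf E[M_\infty]=1$ and to rule out degeneracy. For the former I would invoke the $L^2(\mathbf P)$ bound recorded in the footnote above: when $\beta>0$ is small enough (i.e.\ $\beta<\beta_{L^2}$), the Gaussian computation together with Khas'minskii's lemma gives $\sup_T\mathbf E[M_T^2]<\infty$. An $L^2$-bounded martingale is uniformly integrable, so $M_T\to M_\infty$ holds also in $L^1(\mathbf P)$, whence $\mathbf E[M_\infty]=\lim_T\mathbf E[M_T]=1$.

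The substantial point is the non-degeneracy $\mathbf P(M_\infty=0)=0$, which I would obtain from a zero-one law in the spirit of \cite{B89}. The key input is the factorization coming from the Markov property of $W$ at a fixed time $t$ together with the independence of the noise increments: writing $\theta_t B$ for the noise shifted by $t$ (independent of $\mathcal F_t$ and equidistributed with $B$),
$$
M_{t+s}(0)=\E_0\big[\mathcal E_t(W)\,M_s^{\theta_t B}(W_t)\big],
$$
where $M_s^{\theta_t B}(x)$ is the partition function started at $x$ and driven by the shifted noise. Letting $s\to\infty$ and using the almost sure convergence (now for the shifted family, simultaneously for a.e.\ starting point) yields $M_\infty(0)=\E_0\big[\mathcal E_t(W)\,M_\infty^{\theta_t B}(W_t)\big]$ with $M_\infty^{\theta_t B}$ measurable with respect to the noise after time $t$. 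Since $\mathcal E_t(W)>0$ pointwise and $W_t$ has a strictly positive Lebesgue density, the event $\{M_\infty(0)=0\}$ coincides, up to $\mathbf P$-null sets, with $\{M_\infty^{\theta_t B}(x)=0\text{ for Lebesgue-a.e.\ }x\}$, which lies in $\sigma(\dot B(r,\cdot):r\geq t)$. As $t>0$ is arbitrary, $\{M_\infty=0\}$ is a tail event for the noise, so by Kolmogorov's zero-one law $\mathbf P(M_\infty=0)\in\{0,1\}$. Since $\mathbf E[M_\infty]=1>0$ forces $\mathbf P(M_\infty=0)<1$, we conclude $\mathbf P(M_\infty=0)=0$.

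I expect the main obstacle to be making this tail-measurability argument fully rigorous: one must justify interchanging the limit $s\to\infty$ with the Brownian expectation defining $M_\infty^{\theta_t B}(W_t)$ uniformly in the endpoint (e.g.\ by establishing the martingale limit as a version jointly measurable in the starting point $x$, via Fubini over $x$), and confirm that the strict positivity of the Brownian transition density genuinely transfers $\{M_\infty(0)=0\}$ into the post-$t$ tail field. The martingale convergence and $L^2$ steps are otherwise routine.
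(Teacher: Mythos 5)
Your proof is correct and follows essentially the same route as the paper's: $M_T$ is a nonnegative $(\mathcal F_T)$-martingale with $\mathbf E[M_T]=1$, bounded in $L^2(\mathbf P)$ for $\beta<\beta_{L^2}$, hence it converges $\mathbf P$-almost surely and in $L^1$ to a limit $M_\infty$ of mean one. The paper's proof stops at that one-line observation, whereas you additionally supply the Markov-property factorization and the Kolmogorov zero--one law needed to upgrade $\mathbf P(M_\infty=0)<1$ (which is all that $\mathbf E[M_\infty]=1$ gives) to $\mathbf P(M_\infty=0)=0$; that supplement is the standard argument of \cite{B89} and \cite{MSZ16} and is exactly the step the text leaves implicit, so your version is, if anything, the more complete one.
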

\begin{proof}
Let us set
\begin{equation}\label{eq-H-def}
H_{\beta,T}(W,B)=\beta\int_0^T\int_{\mathbb{R}^d}\phi(y-W_s)\dot B(s,\d y)\d s-\frac{\beta^2T}{2}V(0)
\end{equation}\
Then the proof follows from the fact that
$M_T= \exp\{H_{\beta,T}\}$
is a nonnegative $(\mathcal{F}_T)$-martingale, which remains bounded in $L^2(\mathbf P)$ for $\beta<\beta_{L^2}$.
\end{proof}

\begin{lemma}
	Let $\mathcal G_T$ be the $\sigma$-algebra generated by the Brownian path $(W_s)_{0\leq s\leq T}$ until time $T$. Then, for $n=(n_1,\dots,n_d)\in \N_0^d$ with $|n|:=\sum_{i=1}^d n_i$, and $\lambda=(\lambda_1,\dots,\lambda_d)\in\R^d$, 
	\begin{equation}\label{def-In}
	I_n(T,W_T)= \frac {\partial^{|n|}}{\prod_{j=1}^d \partial\lambda_j^{n_j}}  \, \bigg[\exp\Big\{ \sum_{i=1}^d \lambda_i W^{\ssup i}_T  - \frac 12 |\lambda|^2 T\Big\}\bigg] \, \Bigg|_{\lambda=0}
	\end{equation}
	 is a $(\mathcal{G}_T)$-martingale.
\end{lemma}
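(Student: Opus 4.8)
The lemma asserts that the mixed partial derivative of the Gaussian exponential martingale, evaluated at $\lambda = 0$, is itself a martingale with respect to the filtration of the Brownian path. Let me denote $\Phi_\lambda(T) = \exp\{\langle \lambda, W_T\rangle - \frac{1}{2}|\lambda|^2 T\}$, which for each fixed $\lambda \in \R^d$ is the standard exponential martingale for Brownian motion. The quantity $I_n$ is obtained by differentiating $\Phi_\lambda(T)$ with respect to the $\lambda_j$'s (each $n_j$ times) and setting $\lambda = 0$.

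Fix $\lambda\in\R^d$ and write $\Phi_\lambda(T)=\exp\{\sum_{i=1}^d\lambda_i W^{\ssup i}_T-\frac12|\lambda|^2 T\}$ for the bracketed quantity in \eqref{def-In}, so that $I_n(T,W_T)=\partial_\lambda^n\Phi_\lambda(T)\big|_{\lambda=0}$ with $\partial_\lambda^n:=\prod_{j=1}^d\partial_{\lambda_j}^{n_j}$. For each fixed $\lambda$ this is the ordinary exponential martingale of Brownian motion: since $W_T-W_S$ is independent of $\mathcal G_S$ and centered Gaussian with covariance $(T-S)\mathbf I_d$, one has $\E_0[\exp\{\langle\lambda,W_T-W_S\rangle\}\mid\mathcal G_S]=\exp\{\frac12|\lambda|^2(T-S)\}$, and hence $\E_0[\Phi_\lambda(T)\mid\mathcal G_S]=\Phi_\lambda(S)$ for every $S\le T$.

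The plan is to differentiate this martingale identity in $\lambda$ and evaluate at $\lambda=0$. If the operator $\partial_\lambda^n$ may be exchanged with the conditional expectation, then $\E_0[I_n(T)\mid\mathcal G_S]=\partial_\lambda^n\E_0[\Phi_\lambda(T)\mid\mathcal G_S]\big|_{\lambda=0}=\partial_\lambda^n\Phi_\lambda(S)\big|_{\lambda=0}=I_n(S)$, which is precisely the $(\mathcal G_T)$-martingale property claimed. Thus the entire proof reduces to establishing that the martingale property is inherited under $\lambda$-differentiation at the origin.

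The main obstacle is exactly this interchange of $\partial_\lambda^n$ with $\E_0[\cdot\mid\mathcal G_S]$. Each derivative $\partial_\lambda^n\Phi_\lambda(T)$ equals $P_n(W_T,T,\lambda)\,\Phi_\lambda(T)$ for a polynomial $P_n$ in the entries of $W_T$ with coefficients depending polynomially on $\lambda$ and $T$; as $\lambda$ ranges over a fixed bounded neighbourhood of the origin, the Gaussian decay of $\Phi_\lambda(T)$ dominates the polynomial growth of $P_n$, furnishing an integrable envelope. I would therefore invoke the standard differentiation-under-the-integral theorem (dominated convergence applied to the difference quotients), which applies verbatim to conditional expectations, to license the exchange uniformly on that neighbourhood and conclude.

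Alternatively, and more explicitly, since the coordinates of $W$ are independent one has the factorization $\Phi_\lambda(T)=\prod_{i=1}^d\exp\{\lambda_i W^{\ssup i}_T-\frac12\lambda_i^2 T\}$, so that $I_n(T,W_T)=\prod_{i=1}^d h_{n_i}(W^{\ssup i}_T,T)$, where $h_m(x,t)=\partial_\mu^m\exp\{\mu x-\frac12\mu^2 t\}\big|_{\mu=0}$ is the $m$-th time-dependent Hermite polynomial. Each $h_{n_i}(W^{\ssup i}_T,T)$ is a classical one-dimensional $(\mathcal G_T)$-martingale, and because the factors are driven by independent coordinate Brownian motions, conditioning on $\mathcal G_S$ factorizes and the product is again a martingale with respect to the joint filtration $\mathcal G_T$. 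This recovers the lemma and identifies $I_n$ as a product of Hermite martingales.
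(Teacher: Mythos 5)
Your proof is correct and follows essentially the same route as the paper: both start from the exponential martingale $U_\lambda(T)=\exp\{\langle\lambda,W_T\rangle-\frac12|\lambda|^2T\}$ and identify $I_n$ as its $\lambda$-derivative (equivalently, Taylor coefficient) at $\lambda=0$, transferring the martingale property. Your write-up is in fact more careful than the paper's, which silently interchanges the conditional expectation with the Taylor expansion, whereas you justify the interchange by domination and also note the clean Hermite-product factorization.
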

\begin{proof}
	We first note that 
	$$
	U_\lambda(T)=\exp\bigg\{\sum_{i=1}^d\lambda_iW_T^{\ssup i}-\frac{1}{2}|\lambda|^2T\bigg\}
	$$
	is a $(\mathcal{G}_T)$-martingale for every $\lambda\in \R^d$. 
	Now if we write $U_\lambda(T)$ in its Taylor series at $\lambda=0$ as
	$$
		U_\lambda(T)=\sum_{n_1=0}^{\infty}\cdot\cdot\cdot\sum_{n_d=0}^{\infty}I_n(T,W_T)\frac{\prod_{i=1}^{d}\lambda_i^{n_i}}{\prod_{i=1}^{d}n_i!},
	$$
	it follows that $I_n(T,W_T)$ is a $(\mathcal{G}_T)$-martingale for every $n\in\N_0^d$.
\end{proof}

\begin{lemma}\label{lemma-Y-mart}
The sequence
$$
Y_n(T)=\mathbb E_0\bigg[\exp\Big\{H_{\beta,T}(W,B)\Big\}I_n(T,W_T)\bigg]
$$
is a martingale with respect to the filtration $\mathcal F_T$ generated by $(B_s)_{0\leq s\leq T}$.
\end{lemma}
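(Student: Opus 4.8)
The plan is to exploit the two martingale structures already established and to glue them together by conditioning in the two sources of randomness separately. For a \emph{frozen} Brownian path $W$, the process $t\mapsto \exp\{H_{\beta,t}(W,B)\}$ is an exponential martingale in the noise $B$ with respect to $(\mathcal F_t)$ --- this is precisely the mechanism behind Lemma~\ref{lem2} --- while $I_n(t,W_t)$ is a $(\mathcal G_t)$-martingale in $W$ by the preceding lemma. Since $Y_n(T)=\mathbb E_0\big[\exp\{H_{\beta,T}(W,B)\}\,I_n(T,W_T)\big]$ integrates out $W$, it is $\mathcal F_T$-measurable, and to check the $(\mathcal F_T)$-martingale property for $s<t$ I would first condition on $\mathcal F_s$ with $W$ held fixed, and then use the $(\mathcal G_s)$-martingale property of $I_n$ to bring the time index of the $I_n$-factor back from $t$ to $s$.

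\textbf{Step 1 (integrability).} First I would check that the integrand is absolutely integrable under $\mathbf E\otimes\mathbb E_0$, so that Fubini and the conditional computations below are licensed; this matters because $I_n(t,W_t)$, being a Hermite-type polynomial in $W_t$, changes sign. By independence of $W$ and $B$ and Cauchy--Schwarz, $\mathbf E\,\mathbb E_0\big[|\exp\{H_{\beta,t}\}\,I_n(t,W_t)|\big]\le \big(\mathbf E\,\mathbb E_0[\exp\{2H_{\beta,t}\}]\big)^{1/2}\big(\mathbb E_0[I_n(t,W_t)^2]\big)^{1/2}$; the first factor is finite and bounded in $t$ precisely because $\beta<\beta_{L^2}$ (this is the $L^2(\mathbf P)$-bound of Lemma~\ref{lem2}, via the Khas'minskii estimate in the footnote), and the second is finite because $W_t$ is Gaussian and $I_n$ is polynomial.

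\textbf{Step 2 (conditioning in the noise $B$).} For $s<t$, I would use Fubini to interchange $\mathbf E[\,\cdot\mid\mathcal F_s]$ with the $W$-integration $\mathbb E_0$, and then freeze $W$. For each fixed path $W$ the exponent $\beta\int_0^{\cdot}\!\int\phi(y-W_r)\dot B(r,\d y)\,\d r$ is a continuous $(\mathcal F_t)$-martingale in $B$ with deterministic quadratic variation $\beta^2 t\,V(0)$ (using $V(0)=\|\phi\|_{L^2}^2$), so $\exp\{H_{\beta,t}(W,B)\}$ is its Dol\'eans--Dade exponential and $\mathbf E\big[\exp\{H_{\beta,t}(W,B)\}\mid\mathcal F_s\big]=\exp\{H_{\beta,s}(W,B)\}$. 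Since, with $W$ frozen, $I_n(t,W_t)$ is a constant, pulling it out yields $\mathbf E[Y_n(t)\mid\mathcal F_s]=\mathbb E_0\big[\exp\{H_{\beta,s}(W,B)\}\,I_n(t,W_t)\big]$.

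\textbf{Step 3 (conditioning in the path $W$).} It remains to lower the time index of $I_n$. Now I would condition the inner $\mathbb E_0$ on $\mathcal G_s$: the factor $\exp\{H_{\beta,s}(W,B)\}$ depends on $W$ only through $(W_r)_{r\le s}$ and is therefore $\mathcal G_s$-measurable, while $\mathbb E_0[I_n(t,W_t)\mid\mathcal G_s]=I_n(s,W_s)$ by the $(\mathcal G_t)$-martingale property from the preceding lemma. Hence $\mathbb E_0\big[\exp\{H_{\beta,s}\}\,I_n(t,W_t)\big]=\mathbb E_0\big[\exp\{H_{\beta,s}\}\,I_n(s,W_s)\big]=Y_n(s)$, which is the desired identity. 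The main obstacle is not any single estimate but the careful two-filtration bookkeeping: one must keep track of which expectation ($\mathbf E$ over $B$ or $\mathbb E_0$ over $W$) is being conditioned, justify the Fubini interchange through Step~1 (here the sign-changing of $I_n$ is essential), and verify that the frozen-path exponential-martingale identity combines cleanly with the $\mathcal G_s$-conditioning of $I_n$.
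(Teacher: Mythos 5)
Your proposal is correct and follows essentially the same route as the paper: condition on $\mathcal F_S$ with the path $W$ frozen to use the exponential-martingale identity $\mathbf E[\exp\{H_{\beta,T}\}\mid\mathcal F_S]=\exp\{H_{\beta,S}\}$, then condition on $\mathcal G_S$ to replace $I_n(T,W_T)$ by $I_n(S,W_S)$ via the preceding lemma. Your Step 1 is in fact a slightly more careful justification of integrability (the paper only notes $\mathbf E[Y_n(T)]=\mathbb E_0[I_n(T,W_T)]<\infty$), but the argument is otherwise identical.
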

\begin{proof}
Since $I_n(T,W_T)$ is a $\mathcal G_T=\sigma(W_s:0\leq s\leq T)$-martingale, we get for $S\leq T$
\begin{align*}
	\mathbf E[Y_n(T)|\mathcal{F}_S]&=\mathbb E_0\Big[\mathbf E\big[\exp\big\{H_{\beta,T}(W,B)\big\}\big|\mathcal{F}_S\big]I_n(T,W_T)\Big]\\
	&=\mathbb E_0\Big[\exp\big\{H_{\beta,S}(W,B)\big\}\mathbb E_0\big[I_n(T,W_T)\big|\mathcal{G}_S\big]\Big]\\
	&=Y_n(S).
\end{align*}
Furthermore, since $I_n(T,W_T)$ is integrable w.r.t $\P_0$, $\mathbf E[\exp\{H_{\beta,T}(W,B)\}]=1$ proves the lemma as 
$\mathbf E[Y_n(T)]
	=\mathbb E_0[I_n(T,W_T)]
	<\infty$. 
\end{proof}
We will now control the martingale difference sequence $Y_n(T)-Y_n(T-1)$ in $L^2(\mathbf P)$ which will provide some estimate on the decay on correlations. 
\begin{lemma}\label{lemma-mart-difference}
If $d\geq 3$ and $\beta<\beta_{L^2}$ as in Theorem \ref{key step}, then there exists $C=C(\beta,d,\phi)$ such that for any $T\geq 1$ and $|n|\geq 1$,
$$
\mathbf E\big[(Y_n(T)-Y_n(T-1))^2\big] \leq C T^{|n|-p}.
$$
for $p=9/8$.
\end{lemma}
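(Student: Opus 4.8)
The plan is to reduce everything to a two--replica (annealed) computation and to extract a power gain from the \emph{localization} carried by the increment $e_T-e_{T-1}$, where I abbreviate $e_T=\exp\{\beta^2\int_0^T V(W_s-W'_s)\,\d s\}$ for two independent copies $W,W'$ of the Brownian motion, sampled under $\mathbb E_0^{\otimes 2}$. Since $H_{\beta,T}$ is affine in the noise $B$, a Gaussian (Wick) integration over $B$, in which the two copies of the drift $-\tfrac{\beta^2T}{2}V(0)$ cancel against the self--energy and using $\langle\phi(\cdot-a),\phi(\cdot-b)\rangle_{L^2}=V(a-b)$ and $V(0)=\|\phi\|_{L^2}^2$, gives
\begin{equation*}
\mathbf E\big[\exp\{H_{\beta,T}(W,B)+H_{\beta,T}(W',B)\}\big]=e_T,
\end{equation*}
and therefore $\mathbf E[Y_n(T)^2]=\mathbb E_0^{\otimes 2}\big[e_T\,I_n(T,W_T)\,I_n(T,W'_T)\big]$. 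The assumption $\beta<\beta_{L^2}$ enters precisely here: $\mathbb E_0^{\otimes2}[e_T]=\mathbf E[M_T^2]$ is bounded uniformly in $T$.

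First I would convert the orthogonality relation $\mathbf E[(Y_n(T)-Y_n(T-1))^2]=\mathbf E[Y_n(T)^2]-\mathbf E[Y_n(T-1)^2]$ into one clean expectation. Conditioning the $T-1$ term on the two paths up to time $T-1$, using that $e_{T-1}$ is measurable with respect to them, that $W,W'$ are independent, and that $s\mapsto I_n(s,W_s)$ is a $\cG_s$--martingale, one may replace $I_n(T-1,W_{T-1})I_n(T-1,W'_{T-1})$ by $I_n(T,W_T)I_n(T,W'_T)$, so that
\begin{equation*}
\mathbf E\big[(Y_n(T)-Y_n(T-1))^2\big]=\mathbb E_0^{\otimes 2}\Big[(e_T-e_{T-1})\,I_n(T,W_T)\,I_n(T,W'_T)\Big].
\end{equation*}
Writing $e_T-e_{T-1}=e_{T-1}\big(\exp\{\beta^2\int_{T-1}^{T}V(W_s-W'_s)\,\d s\}-1\big)$ displays the mechanism: the increment vanishes unless $W$ and $W'$ meet within $\supp V$ during $[T-1,T]$, an event that is rare because typically $|W_{T-1}-W'_{T-1}|\asymp\sqrt T$.

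To quantify this I would use $V\le V(0)$ to bound the bracket by $C\int_{T-1}^{T}V(W_s-W'_s)\,\d s$, take absolute values of the Hermite factors via $|I_n(T,z)|\le C\,T^{|n|/2}\prod_i(1+|z_i|/\sqrt T)^{n_i}$, and, for each fixed encounter time $s\in[T-1,T]$, condition on the paths up to time $s$; since the remaining increments on $[s,T]$ have length $\le1$, the conditional expectations of the polynomial factors are controlled by $(1+|W_s|/\sqrt T)^{|n|}$ and $(1+|W'_s|/\sqrt T)^{|n|}$. This reduces matters to estimating, uniformly in $s$,
\begin{equation*}
T^{|n|}\,\mathbb E_0^{\otimes 2}\Big[e_{T-1}\,V(W_s-W'_s)\,(1+|W_s|/\sqrt T)^{|n|}(1+|W'_s|/\sqrt T)^{|n|}\Big].
\end{equation*}
Heuristically the constraint $V(W_s-W'_s)\neq0$ costs a factor $\sim T^{-d/2}$ from the density of $W_s-W'_s$ at scale $O(1)$, which for $d\ge3$ would already give the stronger bound $T^{|n|-d/2}$ with $d/2\ge\tfrac32>\tfrac98$.

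The genuine difficulty, and the reason for the degraded exponent $p=\tfrac98$, is that only the $L^1$--type control $\mathbb E_0^{\otimes2}[e_{T-1}]\le C$ is available: the weight $e_{T-1}$ is not known to be $L^2(\mathbf P)$--bounded, and it is correlated both with the close--encounter event and with the polynomially growing factors. I would handle this by the Markov property at time $T-1$, writing the last display as $\int\!\!\int\rho_{T-1}(x,x')\,G_s(x,x')\,\d x\,\d x'$, where $\rho_{T-1}$ is the $e_{T-1}$--weighted joint density of $(W_{T-1},W'_{T-1})$ (of total mass $\le C$) and $G_s$ is a one--step functional supported, with Gaussian tails, on $|x-x'|\lesssim1$. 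Bounding $G_s$ by heat--kernel estimates and Gaussian moments of the polynomial factors, and controlling the mass of $\rho_{T-1}$ on the near--diagonal strip by Hölder's inequality (this is where one pays, trading the optimal $d/2$ for $\tfrac98$), yields the stated bound. Downstream only $p>1$ is needed, since it makes $\sum_T\mathbf E[(Y_n(T)-Y_n(T-1))^2]/T^{|n|}$ summable and hence forces $Y_n(T)/T^{|n|/2}\to0$, the statement that the degree--$\ge1$ Hermite moments vanish in the Gaussian limit; so $p=\tfrac98$ is comfortably sufficient.
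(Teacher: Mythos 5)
Your proposal follows essentially the same route as the paper: reduce to the two--replica expectation $\mathbb E_0^{\otimes 2}\big[e_{T-1}\big(\exp\{\beta^2\int_{T-1}^T V(W_s-W'_s)\,\d s\}-1\big)I_n(T,W_T)I_n(T,W'_T)\big]$ (the paper gets this by expanding the increment directly rather than via orthogonality of martingale increments, but the two are the same identity), observe that the middle factor is supported on a close encounter of $W,W'$ during $[T-1,T]$, which has probability $O(T^{-d/2})$, use H\"older to decouple the three factors, and use that the Hermite factors have $L^q(\P_0)$--norms of order $T^{|n|/2}$. The exponent $p=9/8$ arises in both arguments for the same reason: the encounter probability $T^{-d/2}$ is raised to the power $3/4$ by H\"older, and $\tfrac34\cdot\tfrac d2=\tfrac98$ at $d=3$.

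The one step you need to repair is your own framing of the H\"older step. You assert that ``only the $L^1$--type control $\mathbb E_0^{\otimes2}[e_{T-1}]\le C$ is available,'' yet your proposed conclusion controls the $e_{T-1}$--weighted mass of the near--diagonal strip by H\"older's inequality; with only $L^1$ control of $e_{T-1}$ that step gives no decay at all. What is actually needed, and what the paper uses, is a higher moment of the two--replica weight: by Khas'minskii's lemma (as in the footnote defining $\beta_{L^2}$), for $\beta$ small enough one has $\mathbb E_0\big[\exp\{8\beta^2\int_0^\infty V(W_{2s})\,\d s\}\big]<\infty$, and the paper then applies a single three--way H\"older with exponents $(8,\,4/3,\,8)$ to the weight, the encounter indicator, and the product $I_n(T,W_T)I_n(T,W'_T)$ respectively. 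With that ingredient supplied (note it is a ``$\beta$ small enough'' condition, strictly stronger than $L^2$--boundedness of $M_T$ alone), your plan closes and yields the stated bound; your observation that any $p>1$ suffices downstream is also correct.
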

\begin{proof}
We compute the martingale difference as follows:
	\begin{align*}
		Y_n&(T)-Y_n(T-1)\\
		&=\mathbb E_0\bigg[\exp\big\{H_{\beta,T}(W,B)\big\}I_n(T,W_T)
		 -\exp\big\{H_{\beta,T-1}(W,B)\big\}I_n(T,W_T)\\
		&\qquad\qquad +\exp\big\{H_{\beta,{T-1}}(W,B)\big\}\Big(I_n(T,W_T)-I_n(T-1,W_{T-1})\Big)\bigg]\\
		&=\mathbb E_0\bigg[\exp\big\{H_{\beta,T-1}(W,B)\big\}\,
		\Big(\exp\big\{H_{\beta,T-1,T}(W,B)\big\}-1\Big)\,I_n(T,W_T)\bigg],
	\end{align*}
	where $H_{\beta,T-1,T}(W,B)$ is nothing but $H_{\beta,T}(W,B)$ with the integral starting in $T-1$ instead of $0$ and we have used that $I_n(T,W_T)$ is a martingale. Then 
	\begin{align}
		&\!\!\!\!\!\!\!\mathbf E\Big[(Y_n(T)-Y_n(T-1))^2\Big]\\
		\begin{split}
		\label{use Hölder}
		&\!\!\!\!\!\!\!=(\mathbb E_0\otimes \mathbb E_0) \bigg[\mathbf E\Big[\exp\Big\{\beta\int_0^{T-1}\int_{\R^d}\big(\phi(y-W_s)+\phi(y-W_s^{\prime})\big)\dot B(s,\d y)\d s-\beta^2(T-1)V(0)\Big\}\Big]\\
		&\!\!\!\!\!\!\!\times\mathbf E\Big[\exp\Big\{\beta\int_{T-1}^T\int_{\R^d}\big(\phi(y-W_s)+\phi(y-W_s^{\prime})\big)\dot B(s,\d y)\d s-\beta^2V(0)\Big\}-1\Big]I_n(T,W_T)I_n(T,W_T^{\prime})\bigg]
	\end{split}
	\end{align}
	with $W$ and $W^\prime$ being two independent copies of the Brownian path. If $K<\infty$ denotes the radius of the support of $\phi$, and if we assume
	 $|W_s-W_s^{\prime}|>2K$, either $\phi(\cdot-W_s)$ or $\phi(\cdot-W_s^{\prime})$ must vanish, everywhere in $\R^d$. Therefore, on the event $\big\{|W_s-W_s^{\prime}|>2K	\,\,\forall\,\, s\in[T-1,T]\big\}$, 
	 $$
	 \begin{aligned}
	 \mathbf E\Big[\exp\Big\{\beta\int_{T-1}^T\int_{\R^d}\big(\phi(y-W_s)+\phi(y-W_s^{\prime})\big)\dot B(s,\d y)\d s\Big\}\Big] &= \bigg[\mathbf E\Big[\exp\Big\{\beta\int_{T-1}^T\int_{\R^d}\phi(y-W_s)\dot B(s,\d y)\d s\Big\}\Big] \bigg]^2 \\
	 &= \e^{\beta^2 V(0)}.
	 \end{aligned}
	 $$
	 Hence, we can estimate
	\begin{align*}
		(\mathbb E_0&\otimes \mathbb E_0) \bigg[\mathbf E\Big[\exp\Big\{\beta\int_{T-1}^T\int_{\R^d}\big(\phi(y-W_s)+\phi(y-W_s^{\prime})\big)\dot B(s,\d y)\d s-\beta^2V(0)\Big\}-1\Big]\bigg]\\
		&=(\mathbb E_0\otimes \mathbb E_0)\bigg[\1_{\{|W_s-W_s^{\prime}|\leq 2K\text{ for some } s\in[T-1,T]\}}\\
		&\times\mathbf E\Big[\exp\Big\{\beta\int_{T-1}^T\int_{\R^d}\big(\phi(y-W_s)+\phi(y-W_s^{\prime})\big)\dot B(s,\d y)\d s-\beta^2V(0)\Big\}-1\Big]\bigg]\\
		&=(\mathbb E_0\otimes \mathbb E_0)\bigg[\1_{\{|W_s-W_s^{\prime}|\leq 2K\text{ for some } s\in[T-1,T]\}}\Big(\exp\Big\{\beta^2\int_{T-1}^TV(W_s-W_s^{\prime})\d s\Big\}-1\Big)\bigg]\\
		&\leq \Big(\e^{\beta^2\|V\|_\infty}-1\Big)(\P_0\otimes \P_0)\big(|W_s-W_s^{\prime}|\leq 2K\text{ for some } s\in[T-1,T]\big),
	\end{align*}
	where the inequality holds as $V=\phi\star \phi$ is bounded.
	
	Since
	\begin{align*}
		(\mathbb E_0&\otimes \mathbb E_0) \bigg[\mathbf E\Big[\exp\Big\{\beta\int_0^{T-1}\int_{\R^d}\big(\phi(y-W_s)+\phi(y-W_s^{\prime})\big)\dot B(s,\d y)\d s-\beta^2(T-1)V(0)\Big\}\Big]\bigg]\\
		&=(\mathbb E_0\otimes \mathbb E_0)\bigg[\exp\Big\{\beta^2\int_0^{T-1}V(W_s-W_s^{\prime})\d s\Big\}\bigg]
	\end{align*}
	and $W_s-W_s^{\prime}\overset{(d)}{=}W_{2s}$, if we now invoke H\"older's inequality with $q=4/3$ and $p=r=8$ to \eqref{use Hölder},  we get
	\begin{align}
		\label{first factor}
		\mathbf E\Big[(Y_n(T)-Y_n(T-1))^2\Big]&\leq\bigg(\mathbb E_0\Big[\exp\Big\{8\beta^2\int_0^{\infty}V(W_{2s})\d s\Big\}\Big]\bigg)^{1/8}\\
		\label{second factor}
		&\times \Big(c(\P_0\otimes \P_0)\big(|W_s-W_s^{\prime}|\leq 2K\text{ for some } s\in[T-1,T]\big)\Big)^{3/4}\\
		\label{third factor}
		&\times\Big((\mathbb E_0\otimes \mathbb E_0)\Big[\big(I_n(T,W_T)I_n(T,W_T^{\prime})\big)^8\Big]\Big)^{1/8}.
	\end{align}
	As $\beta>0$ is chosen small enough and $d\geq 3$, the factor on the right hand side of (\ref{first factor}) is finite (see Lemma 3.1 in \cite{MSZ16}), while for the factor in (\ref{second factor}) we use
	\begin{align}
	(\P_0\otimes \P_0)&\big(|W_s-W_s^{\prime}|\leq 2K\text{ for some } s\in[T-1,T]\big)\nonumber \\
	&\leq\int_{T-1}^T \P_0(W_{2s}\in B_0(2K))\leq c^{\prime}(T-(T-1))(T-1)^{-d/2} \label{fourth factor}	
	\end{align}
	for $T$ large enough and a proper constant $c^{\prime}$. For the factor in (\ref{third factor}), we need some facts regarding the polynomial 
	$$
	I_n(T,x)= \frac {\partial^{|n|}}{\prod_{j=1}^d \partial\lambda_j^{n_j}}  \, \bigg[\exp\Big\{ \sum_{i=1}^d \lambda_i x^{\ssup i}  - \frac 12 |\lambda|^2 T\Big\}\bigg] \, \Bigg|_{\lambda=0}
	$$
	for $x=(x^{\ssup 1},\dots,x^{\ssup d}),\, \lambda=(\lambda_1,\dots,\lambda_d)\in \R^d$ and it is useful to collect them now:
	\begin{lemma}
	\label{lem3}
	Let $T>0$, the polynomial $I_n(T,X_T)$ can be rewritten as
	\begin{equation}\label{star}
	I_n(T,X_T)=\sum_{i_1,...,i_d,j}A_n^{X_T}(i_1,...,i_d,j)X_T^{\ssup 1^{i_1}}\cdot\cdot\cdot X_T^{\ssup d^{i_d}}T^j,
	\end{equation}
	where the coefficients $A_n^{X_T}(i_1,...,i_d,j)$ satisfy the properties
	\begin{itemize}
		\item[(a)] $A_n^{X_T}(i_1,...,i_d,j)=0$ for $i_1+...+i_d+2j\neq |n|$
		\item[(b)] $A_n^{X_T}(i_1,...,i_d,j)=A_n^{X_1}(i_1,...,i_d,j)$
		\item[(c)] $A_n^{X_T}(i_1,...,i_d,0)=\delta_{i_1 n_1}\cdot\cdot\cdot\delta_{i_d n_d}$ for $i_1+...+i_d=|n|.$
	\end{itemize}
\end{lemma}
	We assume Lemma \ref{lem3} for now and continue with the proof of Lemma \ref{lemma-mart-difference}. Note that we only have to estimate  the factor in (\ref{third factor}), for which we can now apply lemma \ref{lem3}(a) and use that $W_T$ and $W_T^{\prime}$ are independent such that
	\begin{align*}
		\Big((\mathbb E_0\otimes \mathbb E_0)\Big[(I_n(T,W_T)I_n(T,W_T^{\prime}))^8\Big]\Big)^{1/8}=\Big(\mathbb E_0\Big[I_n(T,W_T)^8\Big]\Big)^{1/4}.
	\end{align*}
	We claim that  $E_0[I_n(T,W_T)^8]=O(T^{4 |n|})$. Indeed, note that by Lemma \ref{lem3}(a) and \eqref{star}, 
	$$
	I_n(T,W_T)=\sum_{i_1+\dots +i_d+2j=|n|} A_n^{W_T}(i_1,...,i_d,j)W_T^{\ssup 1^{i_1}}\cdot\cdot\cdot W_T^{\ssup d^{i_d}}T^j.
	$$	
	Now we apply multinomial theorem for $I_n(T,W_T)^8$ in the above display whence the expansion yields 
	$$
	I_n(T,W_T)^8 =\sum_{\heap{i_1+\dots +i_d+2j=|n|}{\sum_{\ell}r_\ell=8}} \frac{8!}{\prod_\ell r_\ell!} \prod_\ell \bigg(A_n^{W_T}(i_1,...,i_d,j)W_T^{\ssup 1^{i_1}}\cdot\cdot\cdot W_T^{\ssup d^{i_d}}T^j\bigg)^{r_\ell}.
	$$	
	Since $W_T^{\ssup 1},\dots, W_T^{\ssup d}$ are independent and $\E_0\big[\big(W_T^{\ssup i}\big)^k\big]= T^{k/2} C_k$ for any $k\in \N$ and $i=1,\dots,d$, and furthermore by Lemma \ref{lem3}(b), 
	the coefficients $A_n^{W_T}(i_1,...,i_d,j)$ do not depend on $W_T$ or $T$, we have 
	$$
	\E_0[I_n(T,W_T)^8] =\sum_{\heap{i_1+\dots +i_d+2j=|n|}{\sum_{\ell}r_\ell=8}} C(r_\ell,i_1,\dots,i_d,j) T^{\big(\frac{i_1}2+\dots+\frac{i_d}2+j\big)\sum_\ell r_\ell} = O\big(T^{4|n|}\big),
	$$
	proving that 
	\begin{align}\label{fifth factor}
		\Big((\mathbb E_0\otimes \mathbb E_0)\Big[(I_n(T,W_T)I_n(T,W_T^{\prime}))^8\Big]\Big)^{1/8}=O(T^{|n|}).
	\end{align}		
	 Then \eqref{fourth factor} and \eqref{fifth factor}, together with \eqref{first factor}-\eqref{third factor} imply that for  $d\geq 3$ 
	$$\mathbf E\Big[(Y_n(T)-Y_n(T-1))^2\Big]=O\big(T^{|n|-p}\big).
	$$
	and $p=9/8$. We now owe the reader only the proof of Lemma \ref{lem3}.
	
	\noindent {\bf{Proof of Lemma \ref{lem3}:}} We make two simple observations: 
	\begin{itemize}
		\item[(1)] 
		$$
		\frac{\partial}{\partial \lambda_k} \, \bigg[\exp\Big\{\sum_{i=1}^d\lambda_iX_T^{\ssup i}-\frac{1}{2}|\lambda|^2T\Big\}\bigg] =(X_T^{\ssup k}-\lambda_kT)\exp\Big\{\sum_{i=1}^d\lambda_iX_T^{\ssup i}-\frac{1}{2}|\lambda|^2T\Big\},
		$$
		\item[(2)] 
		$$
		\frac{\partial}{\partial \lambda_k} \, \bigg[\big(X_T^{\ssup k}-\lambda_kT\big)^{i_k}\bigg] =- i_kT\big(X_T^{\ssup k}-\lambda_kT\big)^{i_k-1}.
		$$
	\end{itemize}
	We will now prove Lemma \ref{lem3} by  induction as follows:\\
	For $|n|=0$ we have $I_n(T,X_T)=1$ and thus $i_1+...+i_d+2j=0=|n|$. For the induction step we assume $i_1+...+i_d+2j=|n|$ for every summand in $I_n(T,X_T)$ which is non-zero, where $n\in\N_0^d$ is fixed. If we 
	differentiate  $I_n(T,X_T)$ w.r.t. $\lambda_k$, every summand of the new polynomial changes as written in (1) or (2). Without loss of generality we assume $k=1$. In case (1), only the exponent of $X_T^{\ssup 1}$ increases by $1$ and since the assumption yields $(i_1+1)+i_2+...+i_d+2j=|n|+1$, no summand influences the induction step.\\
	For a summand that follows case (2) the exponent of $T$ increases by $1$, while the exponent of $X_T^{\ssup 1}$ decreases by $1$. The assumption yields $(i_1-1)+i_2+...+i_d+2(j+1)=|n|+1$. From both cases together one can conclude Lemma \ref{lem3} (a).\\
	Furthermore, as the coefficients do not depend on the values of $X_T$ or $T$, part (b) follows immediately.\\
	For the statement in (c) we again analyze the cases (1) and (2). If we again assume $k$ to be one, in case (2) the exponent of $X_T^{\ssup 1}$ decreases by $1$ and in case (1) only $i_1$ increases  by $1$. This means that 
	the requirement $i_1+...+i_d=|n|$ can only be fulfilled by that summand that always follows case (1). Since $A_0(0,...,0)=1$, the statement in Lemma \ref{lem3} (c) now also follows.

This concludes the proof of Lemma \ref{lem3}. Hence Lemma \ref{lemma-mart-difference} is also proven.

	\end{proof}
	\begin{lemma}\label{lem3.5}
	Under the assumptions imposed in Lemma \ref{lemma-mart-difference}, the process $(M_{\tau+N,n})_{N\in\N}$ is an $(\mathcal{F}_{\tau+N})_{N\in\N}$-martingale bounded in $L^2(\mathbf P)$, where
	$$M_{\tau+N,n}=\sum_{S=1}^{N}S^{-|n|/2}(Y_n(\tau+S)-Y_n(\tau+S-1))$$
	and $\tau=T-\lfloor T\rfloor$.
	\end{lemma}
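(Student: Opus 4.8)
The plan is to read $(M_{\tau+N,n})_{N\in\N}$ as a deterministically weighted sum of the increments of the $(\mathcal F_T)$-martingale $Y_n$ and then to exploit the sharp bound of Lemma \ref{lemma-mart-difference} to control its second moments. Write $\Delta_S:=Y_n(\tau+S)-Y_n(\tau+S-1)$ for the $S$-th increment, so that $M_{\tau+N,n}=\sum_{S=1}^N S^{-|n|/2}\Delta_S$ with the convention $M_{\tau+0,n}=0$.

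First I would verify the martingale property. Since $M_{\tau+(N+1),n}=M_{\tau+N,n}+(N+1)^{-|n|/2}\Delta_{N+1}$, it suffices to check $\mathbf E[\Delta_{N+1}\mid\mathcal F_{\tau+N}]=0$. By Lemma \ref{lemma-Y-mart} the process $Y_n$ is an $(\mathcal F_T)$-martingale, so sampling at the times $\tau+N$ gives $\mathbf E[Y_n(\tau+N+1)\mid\mathcal F_{\tau+N}]=Y_n(\tau+N)$, whence $\mathbf E[\Delta_{N+1}\mid\mathcal F_{\tau+N}]=0$. The weights $S^{-|n|/2}$ are deterministic, hence trivially $\mathcal F_{\tau+(S-1)}$-measurable, so this is a discrete martingale transform and $(M_{\tau+N,n})_N$ is an $(\mathcal F_{\tau+N})_N$-martingale; integrability of each $M_{\tau+N,n}$ is immediate since each $\Delta_S\in L^2(\mathbf P)$ by Lemma \ref{lemma-mart-difference}.

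The core of the argument is the $L^2$-bound. Because martingale increments are orthogonal in $L^2(\mathbf P)$, the Pythagorean identity yields
\[
\mathbf E\big[M_{\tau+N,n}^2\big]=\sum_{S=1}^N S^{-|n|}\,\mathbf E\big[(Y_n(\tau+S)-Y_n(\tau+S-1))^2\big].
\]
Inserting Lemma \ref{lemma-mart-difference} with $T=\tau+S\geq 1$ (valid since $\tau\geq 0$ and $S\geq 1$) gives $\mathbf E[\Delta_S^2]\leq C\,(\tau+S)^{|n|-p}$ with $p=9/8$. As $0\leq\tau<1$ we have $(\tau+S)/S\in[1,2]$, so that $S^{-|n|}(\tau+S)^{|n|-p}\leq 2^{|n|}S^{-p}$ uniformly in $S\geq 1$, and therefore
\[
\sup_{N}\,\mathbf E\big[M_{\tau+N,n}^2\big]\leq C\,2^{|n|}\sum_{S=1}^\infty S^{-9/8}<\infty,
\]
the series converging precisely because $p=9/8>1$. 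This is the required $L^2$-boundedness.

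There is no genuine obstacle beyond correctly assembling these two ingredients; the only point requiring attention — and the reason the normalization is chosen to be exactly $S^{-|n|/2}$ — is this final cancellation. The weight $S^{-|n|}$ in the variance sum absorbs precisely the polynomial growth $T^{|n|}$ produced by the eighth-moment estimate $\mathbf E_0[I_n(T,W_T)^8]=O(T^{4|n|})$ of Lemma \ref{lem3}, leaving the summable tail $S^{-p}$ supplied by the decay-of-correlations factor $(T-1)^{-d/2}$ in \eqref{fourth factor} (whose $3/4$-power contributes the exponent $3d/8\geq 9/8$ for $d\geq 3$). The martingale structure and the orthogonality of increments are routine, so the whole statement reduces to the convergence of a $p$-series with $p>1$.
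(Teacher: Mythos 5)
Your proof is correct and follows essentially the same route as the paper's: orthogonality of the martingale increments of $Y_n$ gives the Pythagorean identity for $\mathbf E[M_{\tau+N,n}^2]$, and Lemma \ref{lemma-mart-difference} with $T=\tau+S$ yields the summand bound $2^{|n|}CS^{-9/8}$, hence a convergent $p$-series. The only (welcome) addition is that you spell out the martingale-transform verification, which the paper asserts without detail.
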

	\begin{proof}
	By Lemma \ref{lemma-Y-mart}, $(Y_n(T))_{T\geq 0}$ is an $(\mathcal{F}_T)_{T\geq 0}$-martingale, and we know that the process $(M_{\tau+N,n})_{N\in\N}$ is an $(\mathcal{F}_{\tau+N})_{N\in\N}$-martingale. Thus
	\begin{align*}
	\mathbf E\big[M_{\tau+N,n}^2\big]&=\sum_{S=1}^{N}S^{-|n|}\mathbf E\big[(Y_n(\tau+S)-Y_n(\tau+S-1))^2\big]\\
	&+\sum_{S=1}^{N}\sum_{S\neq R=1}^{N}S^{-|n|/2}R^{-|n|/2}\mathbf E\big[(Y_n(\tau+S)-Y_n(\tau+S-1))(Y_n(\tau+R)-Y_n(\tau+R-1))\big]\\
	&=\sum_{S=1}^{N}S^{-|n|}\mathbf E\big[(Y_n(\tau+S)-Y_n(\tau+S-1))^2\big]
	\end{align*}
	and by Lemma \ref{lemma-mart-difference}
	$$\mathbf E\Big[M_{\tau+N,n}^2\Big]\leq C\sum_{S=1}^{N}S^{-9/8}\Big(\frac{S+\tau}{S}\Big)^{|n|}\leq 2^{|n|}C\sum_{S=1}^N S^{-9/8}\leq2^{|n|}C\sum_{S=1}^{\infty}S^{-9/8}<\infty$$
	for a proper constant $C$ and $T=\tau+N$ large enough. From this we can conclude that $M_{\tau+N,n}$ is an $L^2$-bounded martingale for every $n\in\N^d\setminus\{0\}$.
	\end{proof}

	\begin{lemma}
	\label{lem4}
	Under the assumptions imposed in Lemma \ref{lemma-mart-difference}, if $n\in\N^d\setminus\{0\}$, then
	$$\lim_{T\rightarrow\infty}T^{-|n|/2}Y_n(T)=0$$
	holds $\mathbf P$-almost surely.
\end{lemma}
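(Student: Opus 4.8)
The plan is to extract the result from the $L^2(\mathbf P)$-boundedness furnished by Lemma \ref{lem3.5}, by feeding it into the martingale convergence theorem together with Kronecker's lemma to obtain convergence along arithmetic progressions, and then to upgrade this to the genuine continuous limit $T\to\infty$ by a maximal-inequality argument controlling the oscillation of $Y_n$ over unit intervals. The reason a weighted (Kronecker) argument is needed, rather than a naive estimate, is that a direct second-moment bound gives only $N^{-|n|}\,\mathbf E[Y_n(N)^2]=O(N^{1-p})$, which is not summable for $p=9/8$; the cancellation encoded in the martingale structure of $M_{\tau+N,n}$ is what makes the scheme work.

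First I would fix the fractional part $\tau=T-\lfloor T\rfloor\in[0,1)$ and abbreviate $\Delta_S=Y_n(\tau+S)-Y_n(\tau+S-1)$, so that $M_{\tau+N,n}=\sum_{S=1}^{N}S^{-|n|/2}\Delta_S$. By Lemma \ref{lem3.5} this is an $L^2(\mathbf P)$-bounded $(\mathcal F_{\tau+N})_N$-martingale, hence it converges $\mathbf P$-almost surely to a finite limit; equivalently the series $\sum_{S\geq 1}S^{-|n|/2}\Delta_S$ converges a.s. Since $|n|\geq 1$, the weights $b_S=S^{|n|/2}$ increase to $\infty$, and Kronecker's lemma then yields $b_N^{-1}\sum_{S=1}^{N}\Delta_S\to 0$, that is
$$
N^{-|n|/2}\big(Y_n(\tau+N)-Y_n(\tau)\big)\longrightarrow 0\qquad\mathbf P\text{-a.s.}
$$
As $Y_n(\tau)$ is $\mathbf P$-integrable (Lemma \ref{lemma-Y-mart}) and hence a.s.\ finite, this gives $N^{-|n|/2}Y_n(\tau+N)\to 0$ a.s.; taking $\tau=0$ in particular yields $N^{-|n|/2}Y_n(N)\to 0$ along the integers.

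The remaining, and genuinely delicate, point is to pass from these arithmetic progressions to the continuous parameter $T\to\infty$: the null set produced by Kronecker's lemma depends on the residue $\tau$, and $\tau=\tau(T)$ sweeps out all of $[0,1)$ as $T$ grows, so one cannot simply intersect uncountably many such null sets. I would instead control increments uniformly over each unit interval. Applying Doob's $L^2$ maximal inequality to the $(\mathcal F_{N+\tau})$-martingale $\tau\mapsto Y_n(N+\tau)-Y_n(N)$ on $[0,1]$ and invoking Lemma \ref{lemma-mart-difference} gives
$$
\mathbf E\Big[\sup_{0\leq\tau\leq 1}\big(Y_n(N+\tau)-Y_n(N)\big)^2\Big]\leq 4\,\mathbf E\big[(Y_n(N+1)-Y_n(N))^2\big]\leq 4C\,(N+1)^{|n|-p}.
$$
Multiplying by $N^{-|n|}$ and summing over $N$ produces a convergent series of order $\sum_N N^{-p}<\infty$ since $p=9/8>1$, so by monotone convergence $N^{-|n|/2}\sup_{0\leq\tau\leq 1}|Y_n(N+\tau)-Y_n(N)|\to 0$ a.s.

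Finally, for $T\in[N,N+1)$ with $N=\lfloor T\rfloor$ I would write $T^{-|n|/2}|Y_n(T)|\leq N^{-|n|/2}|Y_n(N)|+N^{-|n|/2}\sup_{0\leq\tau<1}|Y_n(N+\tau)-Y_n(N)|$, using $T^{-|n|/2}\leq N^{-|n|/2}$. The first term tends to $0$ by the integer-time Kronecker statement and the second by the oscillation bound just derived, whence $T^{-|n|/2}Y_n(T)\to 0$ $\mathbf P$-almost surely. The main obstacle is precisely this continuous-parameter upgrade: the Kronecker step only delivers convergence for each fixed residue, and the separate, uniform increment control via the maximal inequality together with the summable exponent $p=9/8$ of Lemma \ref{lemma-mart-difference} is what is essential to reach the limit along the full real parameter.
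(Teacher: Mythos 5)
Your proposal is correct, and its core is the same as the paper's: feed the $L^2(\mathbf P)$-boundedness of $M_{\tau+N,n}$ from Lemma \ref{lem3.5} into the martingale convergence theorem, then apply Kronecker's lemma with weights $S^{|n|/2}$ to conclude $N^{-|n|/2}\sum_{S\leq N}\big(Y_n(\tau+S)-Y_n(\tau+S-1)\big)\to 0$ almost surely. Where you diverge is in the passage from integer (or fixed-residue) times to the full continuous parameter. The paper keeps $\tau=T-\lfloor T\rfloor$ as a $T$-dependent residue, writes $Y_n(T)$ as the telescoping sum plus the boundary term $Y_n(\tau)$, and disposes of the latter by a Cauchy--Schwarz bound uniform over $\tau\in[0,1)$; but in doing so it asserts the Kronecker conclusion ``for all $\tau\in[0,1)$'' and then evaluates it along the moving residue $\tau(T)$, which implicitly requires the convergence to hold simultaneously (indeed uniformly) over an uncountable family of residues --- a point the paper does not justify. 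You instead run Kronecker only at $\tau=0$ and control the oscillation of $Y_n$ over each interval $[N,N+1]$ via Doob's $L^2$ maximal inequality combined with the bound $\mathbf E[(Y_n(N+1)-Y_n(N))^2]\leq C(N+1)^{|n|-p}$ from Lemma \ref{lemma-mart-difference}, whose exponent $p=9/8>1$ makes $\sum_N N^{-|n|}\mathbf E[\sup_{0\le\tau\le1}(Y_n(N+\tau)-Y_n(N))^2]$ converge. This cleanly sidesteps the uncountable-null-set issue and is arguably the more rigorous route; its only additional cost is the (mild, and also implicitly assumed by the paper) requirement that $T\mapsto Y_n(T)$ admit a right-continuous modification so that Doob's inequality applies over a continuum of times. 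Your preliminary remark that a naive second-moment bound only yields $N^{-|n|}\mathbf E[Y_n(N)^2]=O(N^{1-p})$, which is not summable, correctly identifies why the martingale/Kronecker mechanism is indispensable.
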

     \begin{proof}
    From Lemma \ref{lem3.5} we deduce that $M_{\tau+N,n}$   converges almost surely w.r.t. $\mathbf P$. Then
	$$\lim_{N\rightarrow\infty}N^{-|n|/2}\sum_{S=1}^{N}Y_n(\tau+S)-Y_n(\tau+S-1)=0$$
	for all $\tau\in[0,1)$ and it follows
	$$\lim_{T\rightarrow\infty}T^{-|n|/2}\sum_{S=1}^{\lfloor T\rfloor}Y_n(\tau+S)-Y_n(\tau+S-1)=0$$
	for $\tau=T-\lfloor T\rfloor$. Since
	$$Y_n(T)=\sum_{S=1}^{\lfloor T\rfloor}Y_n(\tau+S)-Y_n(\tau+S-1)+Y_n(\tau),$$
	the claim in Lemma \ref{lem4} follows if we show that $T^{-|n|/2}Y_n(\tau)$ goes almost surely to zero. But by the Cauchy-Schwarz inequality
	$$Y_n(\tau)^2\leq \mathbb E_0\Big[I_n(\tau,W_{\tau})^2\Big]\mathbb E_0\bigg[\exp\Big\{2\beta\int_0^{\tau}\int_{\R^d}\phi(y-W_s)\dot B(s,\d y)\d s-\beta^2\tau V(0)\Big\}\bigg].$$
	The first factor on the right hand side is finite as $\tau$ is bounded by zero and one and all moments of a Gaussian random variable are finite. Again, because $\tau$ is bounded and as $\phi$ is a bounded function with compact support, 
	$$T^{-|n|/2}\bigg(\mathbb E_0\bigg[\exp\Big\{2\beta\int_0^{\tau}\int_{\R^d}\phi(y-W_s)\dot B(s,\d y)\d s-\beta^2\tau V(0)\Big\}\bigg]\bigg)^{1/2}\rightarrow 0$$
	almost surely, as $T\rightarrow\infty$.   
\end{proof}

We finally turn to the proof of Theorem \ref{key step}.
\begin{proof}[{\bf{Proof of Theorem \ref{key step}}}]
	By Lemma \ref{lem3}(a) we can rewrite the almost sure convergence statement in Lemma \ref{lem4} as
	\begin{align}
	\begin{split}
		\label{(2.2)}
		\lim_{T\rightarrow\infty}\mathbb E_0\bigg[&\sum_{i_1,..,i_d}A_n^{W_T}\Big(i_1,...,i_d,\frac{|n|-i_1-...-i_d}{2}\Big)\bigg(\frac{W_T^{\ssup 1}}{\sqrt{T}}\bigg)^{i_1}\cdot\cdot\cdot\bigg(\frac{W_T^{\ssup d}}{\sqrt{T}}\bigg)^{i_d}\\
		&\times \exp\Big\{\beta \int_0^T\int_{\R^d}\phi(y-W_s)\dot B(s,\d y)\d s-\frac{\beta^2T}{2}V(0)\Big\}\bigg]=0
	\end{split}
	\end{align}
	where we implicitly assume $(|n|-i_1-...-i_d)/2\in\N_0$.
	
	We now consider the sum
	$$\sum_{i_1,...,i_d}A_n^X\Big(i_1,..,i_d,\frac{|n|-i_1-...-i_d}{2}\Big)X^{\ssup 1^{i_1}}\cdot\cdot\cdot X^{\ssup d^{i_d}}$$
	with the i.i.d. random variables $X,X^{\ssup 1},...,X^{\ssup d}$, where $X\sim$ Normal$(0,1)$. The random variable $X$ has mean zero so that the odd moments of $X$ are zero. Since the random variables are independent, this means if at least one $i_k$ is odd, the expectation of that summand is zero. 
	A summand with no odd exponents arises from a summand where $i_1,...,i_{k-1},i_{k+1},...,i_d$ are even and $i_k$ is odd in the $|n|-1$-th derivative of $\exp\big\{\sum_{i=1}^{d}\lambda_iX^{\ssup i}-\frac{1}{2}\sum_{i=1}^d\lambda_i^2\big\}$. Note that by the product rule
	\begin{align}
	\begin{split}
		\label{product rule}
		\frac{\partial}{\partial\lambda_j}& \, \bigg[\big(X^{\ssup j}-\lambda_j\big)^{2k+1}\exp\Big\{\sum_{i=1}^d\lambda_i X^{\ssup i}-\frac{1}{2}|\lambda|^2\Big\}\bigg] \, \Bigg|_{\lambda=0}\\
		&=\bigg[\big(X^{\ssup j}-\lambda_j\big)^{2k+2}-(2k+1)\big(X^{\ssup j}-\lambda_j\big)^{2k}\bigg]\exp\Big\{\sum_{i=1}^d\lambda_iX^{\ssup i}-\frac{1}{2}|\lambda|^2\Big\}\bigg|_{\lambda=0}.
		\end{split}
	\end{align}
	We denote by $E$ the expectation w.r.t the random variables $X,X^{\ssup 1},...,X^{\ssup d}$. As $X$ is Normal$(0,1)$ distributed, 
	$E[X^{2k+2}]=(2k+1)E[X^{2k}]$
	for all $k\in\N_0$ and so by \eqref{product rule} one can obtain that the expectation of the summands with no odd exponents cancel each other. Both statements together (that for all exponents are even and that for at least one is odd) yield
	\begin{align}
		\label{(2.3)}
		E\bigg[\sum_{i_1,...,i_d}A_n^X\Big(i_1,..,i_d,\frac{|n|-i_1-...-i_d}{2}\Big)X^{\ssup 1^{i_1}}\cdot\cdot\cdot X^{\ssup d^{i_d}}\bigg]=0	
	\end{align}
	which shows that the sequence on the left hand side of \eqref{(2.2)} converges almost surely to the left hand side of \eqref{(2.3)} for every $n\in\N_0^d\setminus\{0\}$. By Lemma \ref{lem2} this holds true after dividing the sequence by $M_T$ and thus
	\begin{align}
	\begin{split}
	\label{2.2 and 2.3}
		\E^{\widehat{\mathbb{Q}}_{\beta,T}}&\bigg[\sum_{i_1,..,i_d}A_n^{W_T}\Big(i_1,...,i_d,\frac{|n|-i_1-...-i_d}{2}\Big)\bigg(\frac{W_{T}^{\ssup 1}}{\sqrt T}\bigg)^{i_1}\cdot\cdot\cdot\bigg(\frac{W_{T}^{\ssup d}}{\sqrt T}\bigg)^{i_d}\bigg]\\
		&\longrightarrow 	E\bigg[\sum_{i_1,...,i_d}A_n^X\Big(i_1,..,i_d,\frac{|n|-i_1-...-i_d}{2}\Big)X^{\ssup 1^{i_1}}\cdot\cdot\cdot X^{\ssup d^{i_d}}\bigg]
	\end{split}
	\end{align}
	almost surely.
	
	By Lemma $\ref{lem3}$(b), the coefficients $A_n^X\big(i_1,..,i_d,\frac{|n|-i_1-...-i_d}{2}\big)$ coming from
	$$\frac{\partial^{|n|}}{\partial\lambda_1^{n_1}\cdot\cdot\cdot\partial\lambda_d^{n_d}} \, \bigg[\exp\Big\{\sum_{i=1}^{d}\lambda_iX^{\ssup i}-\frac{1}{2}\sum_{i=1}^d\lambda_i^2\Big\}\bigg] \, \Bigg|_{\lambda=0}$$
	are equal to the coefficients $A_n^{W_T}\big(i_1,..,i_d,\frac{|n|-i_1-...-i_d}{2}\big)$.
	By induction, this statement yields for all $n\in\N_0^d\setminus\{0\}$
	\begin{align}
		\label{convergence to snd}
		\lim_{T\rightarrow \infty}\mathbb E^{\widehat{\mathbb{Q}}_{\beta,T}}\bigg[\bigg(\frac{W_{T}^{\ssup 1}}{\sqrt T}\bigg)^{n_1}\cdot\cdot\cdot\bigg(\frac{W_{T}^{\ssup d}}{\sqrt T}\bigg)^{n_d}\bigg]=E\bigg[X^{\ssup 1^{n_1}}\cdot\cdot\cdot X^{\ssup d^{n_d}}\bigg]\quad\text{a.s.}:
	\end{align}
	Indeed, for $|n|=1$ this is obvious. We check the induction step for $n\rightarrow \widetilde{n}$, where $\widetilde{n}=(n_1+1,n_2,...,n_d)$. The induction hypothesis is that (\ref{convergence to snd}) holds for all $m\in\N_0^d\setminus\{0\}$ that satisfy $m_1\leq n_1+1,...,m_d\leq n_d$ and at least one of these inequalities is strictly. By lemma \ref{lem3}(c) we know that the coefficient $A_{\widetilde{n}}^{W_{T}}(n_1+1,n_2,...,n_d,0)$ has value $1$. This means that by the induction hypothesis
	\begin{align}
	\begin{split}
	\label{by induction hypothesis}
		&\mathbb E^{\widehat{\mathbb{Q}}_{\beta,T}}\bigg[\sum_{i_1,..,i_d}A_{\widetilde{n}}^{W_{T}}\Big(i_1,...,i_d,\frac{|n|+1-i_1-...-i_d}{2}\Big)\bigg(\frac{W_{T}^{\ssup 1}}{\sqrt T}\bigg)^{i_1}\cdot\cdot\cdot\bigg(\frac{W_{T}^{\ssup 1}}{\sqrt T}\bigg)^{i_d}\bigg]\\
		&\qquad\qquad-\E^{\widehat{\mathbb{Q}}_{\beta,T}}\bigg[\bigg(\frac{W_{T}^{\ssup 1}}{\sqrt T}\bigg)^{n_1+1}\bigg(\frac{W_{T}^{\ssup 2}}{\sqrt T}\bigg)^{n_2}\cdot\cdot\cdot\bigg(\frac{W_{T}^{\ssup d}}{\sqrt T}\bigg)^{n_d}\bigg]\\
		&\longrightarrow E\bigg[\sum_{i_1,...,i_d}A_{\widetilde n}^X\Big(i_1,..,i_d,\frac{|n|+1-i_1-...-i_d}{2}\Big)X^{\ssup 1^{i_1}}\cdot\cdot\cdot X^{\ssup d^{i_d}}\bigg]\\
		&\qquad\qquad-E\Big[X^{\ssup 1^{n_1+1}}X^{\ssup 2^{n_2}}\cdot\cdot\cdot X^{\ssup d^{n_d}}\Big]
	\end{split}
	\end{align}
	almost surely as $T\to\infty$. If we set $n$ to be $\widetilde{n}$ in \eqref{2.2 and 2.3}, the induction step follows by combining \eqref{2.2 and 2.3} and \eqref{by induction hypothesis}. 
	Since (\ref{convergence to snd}) shows that all moments converge to the moments of the standard normal distribution, we have proved \eqref{claim} and thus Theorem \ref{key step}.
\end{proof}

\noindent{\bf{Acknowledgement:}} The authors would like to thank an anonymous referee for a very careful reading of the manuscript and many valuable comments.

\end{document}